\newtheorem{thm}{Theorem}
\newcommand{\ignore}[1]{}
\newcommand{\R}{\mathbb{R}}
\newcommand{\cP}{\mathcal{P}}
\newcommand{\x}{{\rm exc}}
\newcommand\ring[1]{\mathaccent23{#1}}
\def\Vr{\ring{V}} 
\definecolor{RED}{rgb}{1,0,0}\definecolor{BLUE}{rgb}{0,0,1} 
\begin{document}

\title{Phylogenetic flexibility via Hall-type inequalities and submodularity}


\author{Katharina T.~Huber \and Vincent Moulton \and Mike Steel}


\institute{K. T.~Huber \and V. Moulton\at
           School of Computing Sciences, University of East Anglia, Norwich, UK \\
           \email{K.Huber@uea.ac.uk, V.Moulton@uea.ac.uk}           
           \and
           M. Steel \at
           Biomathematics Research Centre, University of Canterbury, Christchurch, NZ\\
            \email{mike.steel@canterbury.ac.nz}  
}

\date{Received: date / Accepted: date}

\maketitle

\begin{abstract}
Given a collection $\tau$ of subsets of a finite set $X$, we say that $\tau$ is {\em phylogenetically flexible} if, for any collection $R$ of rooted phylogenetic trees whose leaf sets comprise the collection $\tau$, $R$ is compatible (i.e. there is a rooted phylogenetic $X$--tree that displays each tree in $R$).  We show that $\tau$ is phylogenetically flexible if and only if it satisfies a Hall-type inequality condition of being `slim'. 
Using  submodularity arguments, we show that there is a polynomial-time algorithm for determining whether or not $\tau$ is slim.
This `slim' condition reduces to a simpler inequality in the case where all of the sets in $\tau$ have size 3, a property we call `thin'.  Thin sets were recently shown to be equivalent to the existence of an (unrooted) tree for which the median function provides an injective mapping to its vertex set;  we show here that the unrooted tree in this representation can always be chosen to be a caterpillar tree. We also characterise when a collection $\tau$ of subsets of size 2 is thin 
(in terms of the flexibility of total orders rather than phylogenies) and show that this holds if and only if an associated bipartite graph is a forest. The significance of our results for phylogenetics is in providing precise and efficiently verifiable conditions under which supertree methods that require consistent inputs of trees,  can be applied to  any input trees on given subsets of species.  
\keywords{phylogenetic tree \and set systems \and partial taxon coverage \and bipartite graph \and Hall's marriage theorem \and submodularity}
\end{abstract}

\section{Introduction}
\label{intro}
In phylogenomics,  biologists often encounter the following problem: Given a collection $\tau$ of  different subsets of species, the corresponding  phylogenetic trees --- each one reconstructed from the genomic data available for the corresponding subset ---  cannot be consistently combined  into a single phylogenetic tree for all the species.  
When this occurs, various heuristic and somewhat ad-hoc `supertree' methods (such as `matrix recoding with parsimony') are often applied to provide some estimate of a parent tree \cite{fel}.
However, when the collection of subsets of species has sufficiently sparse overlap (in a sense we will make precise shortly), then {\em any} phylogenetic tree assignment for $\tau$ will lead to a set of trees that can be consistently combined into a parent tree. Fig.~\ref{fig1}(i) provides an example of this.

\begin{figure}[htb]
\centering
\includegraphics[scale=1.0]{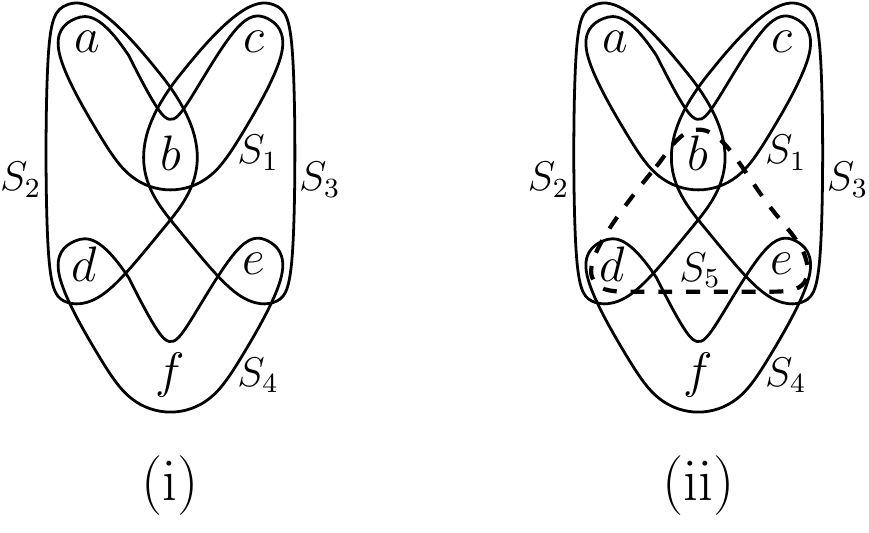}
\caption{A collection $\tau= \{\{a,b,c\}, \{a,b,d\}, \{b,c,e\}, \{d, e,
f\}\}$ of four sets that is phylogenetically flexible; and (ii)  a
collection $\tau' = \tau \cup \{\{b,d,e\}\}$ that fails to be
phylogenetically flexible.  In (i) all of the $3^4 = 81$ choices of rooted
triples (one for each of the four leaf sets) gives a set of rooted triples
that is displayed by at least one rooted phylogenetic tree on the six leaves
$a,b, \ldots, f$. However, in (ii) this fails, for example, the set of
rooted triples $ab|c, bd|a, bc|e, df|e$ together with $be|d$ (for the fifth
set) is not displayed by any tree on the six leaves. The set $\tau$ is thin,
but $\tau'$ is not, since it has a subset (namely $\tau'$ itself) which has
strictly negative excess (equal to $-1$).
}
\label{fig1}
\end{figure}

In this paper, we investigate the conditions under which the existence of a consistent parent tree can be guaranteed regardless of the tree structure for each subset. Here `parent' tree means that the leaf set of the tree is the union of the leaf sets of the input trees.   For example, given a set of input trees, 
if there is a parent tree that displays each tree, then a simple, fast and well-known algorithm due to \cite{aho} constructs such a tree in a canonical way.
However, this method will fail to return any phylogenetic tree when presented with input trees that are incompatible (i.e. cannot be displayed by any parent tree). In this paper, we characterise when such a method will always be safe to use on any set of input trees, given the sets of taxa that form the leaf sets of those trees. 
Thus, we consider as input just subsets of species, and develop mathematical characterisations and algorithms for this combinatorial question in the special case where each subset has a fixed (small) size. Later in the paper, we consider how the results extend to more general set systems. Our approach throughout is to  reduce  certain combinatorial questions in phylogenetics to the study of systems of inequalities involving linear expressions, and related submodularity properties.

In the discussion section, we mention a further biological context where the results may be relevant.  Note that there are many reasons why phylogenetic trees are constructed on different subsets of species, and a particularly topical one is that genes used to estimate a given phylogeny may only be present in (or have been sequenced) in a given subset of the species, and these subsets vary from gene to gene \cite{san11}.

Our work is motivated in part by a remarkable combinatorial result by Stefan Gr{\"unewald} \cite{gru12} involving unrooted binary trees. In that paper,
  a set $\cP$ of binary trees having leaves labelled from some set $X$ is said to be  `slim' if for every non-empty subset $\cP'$ of $\cP$, the number of leaves appearing in at least one tree in $\cP'$ is at least the total number of interior edges of $T$ plus 3.  Theorem~1.1 of \cite{gru12} then states that for any such  thin collection $\cP$ there is a tree with leaf set $X$ that `displays' each of the trees in $\cP$.  In particular, this leads to the rather striking consequence that ``the property of being slim only depends on the involved leaf sets of the trees and not on which phylogenetic tree is chosen for a fixed leaf set''(p. 324 \cite{gru12}). 
In this paper, we explore this notion further, and by working with rooted trees (rather than unrooted ones) we are able to establish precise characterizations of the analogous `slim' property.

Our work is also partly motivated by  results from \cite{dre09} where slim-type properties also arise in a tree-based setting, but for a quite different question involving `median' vertices. To explain this, given a tree $T = (V, E)$ and a subset $S$ of $V$ of size $3$, say $S = \{x, y, z\}$, consider the path in $T$ connecting $x,y$, the path
connecting $x, z$ and the path connecting $y, z$. There is a unique vertex that is shared by these three paths, the {\em median vertex}
of $S$ in $T$, denoted ${\rm med}_T (S)$.  In \cite{dre09}, the
authors show that  `slim'--type properties characterize when a set of
triples from $X$ can be realized as providing an encoding of the
interior vertices of a (unrooted) tree with leaf set $X$ 
 (an extension of this to sets of subset of $X$ of size greater than 3 is also described). In this paper, we extend this result further by showing that the tree that provides this encoding can be chosen to have a particular special type of structure (a `caterpillar'). 

The phylogenetic combinatorics of subsets of a species set is a topic that has  also been explored recently in the setting of `phylogenetic decisiveness' \cite{ste10}. However, the questions that we consider here are quite different from that setting;  rather than requiring a dense overlap of the species subsets in the phylogenetic decisiveness setting, here we investigate sparse overlap.  

We begin with some definitions.  Throughout this paper, $X$ will denote a fixed finite set.

\subsection{Thin set systems}

Suppose $\tau$ is a non-empty subset of $\binom{X}{r}$, $r \ge 2$. 
Let $L(\tau) =  \bigcup_{s\in \tau}s$ 
(i.e.  the set of elements of $X$ that appear in at least one set in $\tau$) and 
define the {\em excess} of $\tau$, denoted $\x(\tau)$,  by:
$$\x(\tau) = |L(\tau)|-|\tau| - (r- 1).$$

We  say that $\tau$ is {\em thin} if, for all non-empty subsets $\tau'$ of $\tau$, we have:
$$\x(\tau') \geq 0.$$

This notion appears in related but slightly different settings, namely for the leaf sets of unrooted trees in \cite{gru12},  in the median representation of sets of triples in \cite{dre09}, and as sparse triplet covers in \cite{gru17}.

In the following lemma, recall that a collection of (not necessarily distinct) sets $\{B_1, B_2, \ldots, B_m\}$ has a {\em system of distinct representatives} if one can select an element $x_i \in B_i$ for each $i \in \{1, \ldots, m\}$ so that the elements
$x_1, x_2, \ldots, x_m$ are all distinct. For  $\tau$ a non-empty subset of
 $\binom{X}{r}$, $r \ge 2$ with $L(\tau) = X$ and for $x\in X$ let 
$n_{\tau}(x)$ be the number of elements in $\tau$ that contain $x$.

\begin{lemma}
\label{lemthin}
	Let $\tau$ be a non-empty subset of $\binom{X}{r}$, $r \ge 2$ and $L(\tau) = X$.  If $\tau$ is thin, then the following properties hold:
	\begin{itemize}
	\item[(i)]  $|\tau| \leq n-r+1$ where $n=|X|$.
	\item[(ii)] For some $x \in X$, $n_{\tau}(x)\leq r-1$.
	\item[(iii)] For any subset $B$ of $X$ of size $r-1$, the collection of sets $\{S-B: S \in \tau\}$  has a system of distinct representatives.
	\end{itemize}
	\end{lemma}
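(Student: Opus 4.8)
The plan is to treat (i) and (ii) by elementary counting and then derive (iii) from Hall's marriage theorem, using the excess inequality itself as the Hall condition.

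First I would prove (i) by applying the thinness hypothesis to $\tau'=\tau$ itself: since $L(\tau)=X$ has size $n$, the inequality $\x(\tau)\ge 0$ reads $n-|\tau|-(r-1)\ge 0$, i.e. $|\tau|\le n-r+1$. For (ii) I would argue by contradiction: suppose $n_\tau(x)\ge r$ for every $x\in X$. Counting the incidences $(x,S)$ with $x\in S\in\tau$ in two ways gives
$$r|\tau| = \sum_{S\in\tau}|S| = \sum_{x\in X} n_\tau(x) \ge rn,$$
so $|\tau|\ge n$; but part (i) gives $|\tau|\le n-r+1\le n-1$ because $r\ge 2$, a contradiction. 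Hence $n_\tau(x)\le r-1$ for some $x\in X$.

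For (iii), fix $B\subseteq X$ with $|B|=r-1$ and regard $\{S-B: S\in\tau\}$ as a family indexed by $\tau$ (the sets need not be distinct, which is exactly the setting of the system-of-distinct-representatives notion recalled before the lemma). Since $|S|=r$ for every $S\in\tau$, each $S-B$ is non-empty. By Hall's theorem, an SDR exists iff for every non-empty $\tau'\subseteq\tau$ we have $\bigl|\bigcup_{S\in\tau'}(S-B)\bigr|\ge|\tau'|$. Now $\bigcup_{S\in\tau'}(S-B) = \bigl(\bigcup_{S\in\tau'}S\bigr)-B = L(\tau')-B$, so the left-hand side equals $|L(\tau')-B|\ge |L(\tau')|-|B| = |L(\tau')|-(r-1)$. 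Applying thinness to $\tau'$ gives $|L(\tau')|-(r-1)\ge|\tau'|$, and chaining the two inequalities yields the Hall condition; hence the required SDR exists.

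The only points needing any care are in (iii): one must note that each $S-B$ is non-empty so that an SDR is even meaningful and Hall's theorem applies, and that the union over a subcollection of the sets $S-B$ is precisely $L(\tau')-B$. Once these observations are in place the excess inequality $\x(\tau')\ge 0$ is literally the Hall condition, so there is no substantive obstacle; the argument is short and self-contained, relying only on the definition of excess and the statement of Hall's theorem recalled in the text.
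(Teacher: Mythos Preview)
Your proof is correct and, for parts (i) and (iii), essentially identical to the paper's: both take $\tau'=\tau$ for (i), and both verify the Hall condition for (iii) via $|L(\tau')-B|\ge|L(\tau')|-(r-1)\ge|\tau'|$. (In fact you are slightly more careful than the paper in (iii): the paper writes $|\bigcup A_i|=|\bigcup S_i|-(r-1)$ where only $\ge$ is warranted, though this does not affect the argument.)

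For (ii) your route is genuinely simpler. You assume $n_\tau(x)\ge r$ for all $x$, double-count incidences to get $|\tau|\ge n$, and contradict (i). The paper instead assumes only $n_\tau(x)\ge r-1$ for all $x$, lets $k$ be the number of $x$ with $n_\tau(x)=r-1$, and combines $|\Omega|\ge (r-1)k+r(n-k)$ with $|\Omega|=r|\tau|\le r(n-r+1)$ to deduce $k\ge r(r-1)\ge 2$; hence some (indeed at least two) elements have $n_\tau(x)=r-1$. So the paper's argument actually yields a stronger conclusion (at least $r(r-1)$ elements of low degree in the non-trivial case), but this extra strength is never used elsewhere in the paper, and your direct contradiction is the cleaner way to establish the lemma as stated.
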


	\begin{proof}
	Part (i)  follows from the defining condition for thin upon taking $\tau' = \tau$.  

Part (ii) can be established by the following double-counting argument. 	
Suppose that there is no element $x\in X$  with $n_{\tau}(x)\le r-2$, so 
that $n_{\tau}(x) \ge r-1$ for all $x \in X$. Let $\Omega = \{(x,S) \,:\, x \in S \in \tau\}$.
We then have:
\begin{equation}
\label{om1}
|\Omega| = \sum_{x \in X} n_{\tau}(x) \ge (r-1)k + r(n-k)
\end{equation}
where $k = |\{x \in X \,:\, n_{\tau}(x) = r-1\}|$.
On the other hand: 
\begin{equation}
\label{om2}
|\Omega| = r|\tau| \le r (n-(r-1)),
\end{equation}
 where the inequality is from Part (i).
Combining (\ref{om1}) and (\ref{om2}) gives $k \ge r(r-1)$ and, so,  $k\geq 2$. By the definition of $k$, (ii) follows.

For Part (iii), consider the union of any $l$ sets $A_1, A_2, \ldots,
  A_l $ where $ A_i =S_i - B$ and $S_i \in \tau$ for $i=1,\ldots,l$
  (note that these sets may have different sizes and a set may
  occur more than once). 
Since $\tau$ is thin, $|\bigcup_{i=1}^l S_i| \geq l + (r-1)$, and so, since $B$ has size $r-1$,
 $|\bigcup_{i=1}^lA_i| = |\bigcup_{i=1}^l S_i| -(r-1) \geq l$.
Since the inequality $|\bigcup_{i=1}^lA_i| \geq l$ holds for all $1\leq l
  \leq |\tau|$, Hall's marriage theorem \cite{hal} ensures that
$\tau$ has a system of distinct representatives. 

	\hfill$\Box$ \end{proof}

For the first part this paper, we will deal with the case where $r=3$. However,
the main theorem in this setting (Theorem~\ref{main})  will be used in Section~\ref{slimcase} to derive a result for the more general case where the sets have different sizes.
When $r=3$,  notice that if $|\tau'| =1$, then $\x(\tau') = 3-1-2=0$; however,
 if $|\tau'|=2$, then $\x(\tau') \geq 4-2-2=0,$
so it suffices, in the definition of thin, to consider subsets of $\tau'$ of $\tau$ of size at least 3.

A simple way to generate a thin set is to take any ordered sequence of subsets of $X$ of size 3, for which the ordered sequence has the property that each member contains
at least one element of $X$ that is not present in any earlier member of the sequence.  However, not all thin sets can be obtained in this way. For example, consider the collection 
$\{\{a,b,c\},   \{c, d, e\}, \{b, e, f\}, \{a, d, f\}\}$
of four subsets sets of $X=\{a, b, \ldots, f\}$.
This collection of subsets is thin, yet these four sets cannot be ordered so as to satisfy the property described.

\subsection{Phylogenetic trees and  flexible sets}

Following \cite{sem}, a {\em rooted phylogenetic tree}  $T$ is a 
rooted tree having a set $L(T)$ of labelled leaves (vertices of out-degree 0) and for which every non-leaf vertex is unlabelled and has out-degree at least 2.
We let $\rho_T$, or more briefly $\rho$ denote the root vertex of $T$, which has in-degree 0.  In case each non-leaf vertex has out-degree exactly 2 we say that $T$ is {\em binary}.  If $L(T)=X$, we will also say that $T$ is a {\em rooted phylogenetic $X$--tree}.  We let $\Vr(T)$ denote the set of interior (i.e. non-leaf) vertices of $T$. Similarly, an {\em unrooted phylogenetic tree}  $T$ is an 
unrooted tree having a set $L(T)$ of labelled leaves (vertices of degree 1) and for which every non-leaf vertex is unlabelled and has degree at least 3.  In case each non-leaf vertex has degree exactly 3 we say that $T$ is {\em binary}.  If $L(T)=X$, we will also say that $T$ is a {\em unrooted phylogenetic $X$--tree}.

A {\em rooted triple} is a rooted binary phylogenetic tree on three leaves, and we denote such a tree as $ab|c$ if it has leaf set $\{a,b,c\}$ with leaf $c$ adjacent to the root. A rooted phylogenetic $X$--tree $T$  is said to {\em display} the rooted triple $ab|c$ if some subdivision of the tree $ab|c$ is a subgraph of $T$.   

A {\em cherry} in a (rooted or unrooted) phylogenetic tree is a pair of
 leaves that is adjacent to the same vertex. A {\em rooted (respectively, 
unrooted) caterpillar} tree on $X$ is a rooted
(resp. unrooted) binary phylogenetic $X$--tree for which the number of cherries is
at most 1 (respectively, 2).

These notions are illustrated in Fig.~\ref{triplet}.

\begin{figure}[htb]
\centering
\includegraphics[scale=0.7]{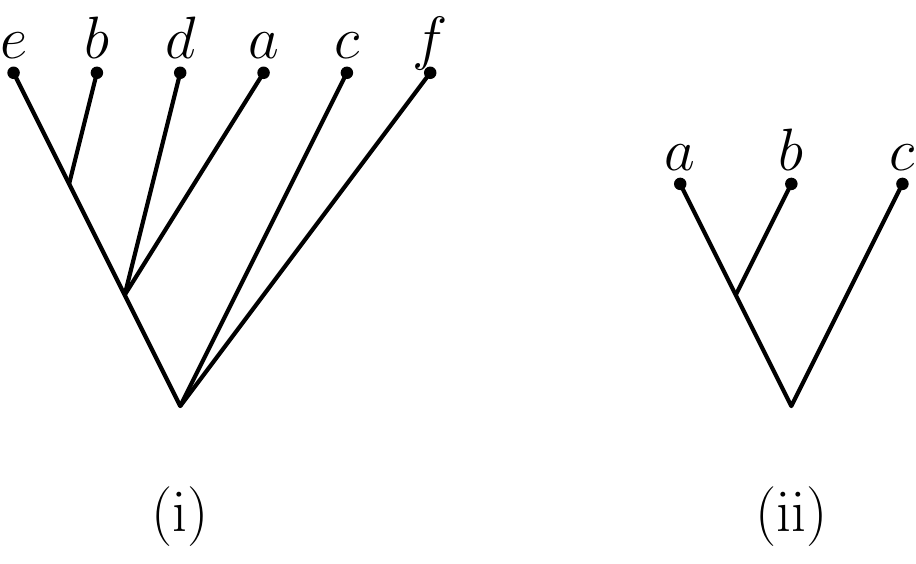}
\caption{(i) A rooted phylogenetic tree on leaf set $\{a,b,c, \ldots, f\}$. 
This tree is not binary, as it has a vertex of out-degree 3 (adjacent to $a$ 
and $d$). (ii) The rooted triple $ab|c$ for which $a,b$ forms a cherry.  This rooted
triple is also a rooted caterpillar and it is displayed by the tree in (i).}
\label{triplet}
\end{figure}

A set $R$ of rooted triples chosen from $X$ is said to be {\em compatible} if there is a rooted phylogenetic $X$--tree $T$ that displays each rooted triple in $R$ (in which case, we say that $T$ {\em displays} $R$).  Note that
if $R$ is compatible, then $T$ can always be chosen to be a binary tree and $R$ can contain at most one tree for any triplet (i.e. at most one of $ab|c$, $ac|b$, and $bc|a$ can be present in $R$). 

Suppose that we have a set $R$ of rooted triples with leaves chosen from $X$.  We will let $||R||$ denote the subset of $\binom{X}{3}$ consisting of the leaf sets of the trees in $R$.
We say that a non-empty subset $\tau$ of $\binom{X}{3}$ is {\em phylogenetically flexible} if every set $R$ of rooted triples for which $||R|| = \tau$ holds is compatible. An example to illustrate this notion is provided in Fig.~\ref{fig1}.

The following observation that phylogenetic flexibility is hereditary is straightforward to check.

\begin{lemma}\label{obs1}
	Suppose $\tau$ is a non-empty subset of ${X \choose 3}$ 
that is phylogenetically flexible.
	If $\tau'$ is a non-empty subset of $\tau$, then $\tau'$ is phylogenetically flexible. 
\end{lemma}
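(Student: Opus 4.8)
The plan is to prove the contrapositive: if $\tau'$ is not phylogenetically flexible, then $\tau$ is not phylogenetically flexible either. So assume there is a set $R'$ of rooted triples with $||R'|| = \tau'$ that is not compatible; I want to produce a set $R$ of rooted triples with $||R|| = \tau$ that is not compatible.

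First I would enlarge $R'$ to such an $R$ in the most naive way. For every set $s \in \tau \setminus \tau'$, choose an arbitrary rooted triple on leaf set $s$ (for instance, if $s = \{a,b,c\}$, take $ab|c$), and let $R$ be the union of $R'$ with this choice of triples. Then $R' \subseteq R$ and $||R|| = ||R'|| \cup (\tau \setminus \tau') = \tau' \cup (\tau \setminus \tau') = \tau$, so $R$ is a legitimate candidate against the flexibility of $\tau$. (When $\tau' = \tau$ there is nothing to add and the statement holds trivially.)

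Next I would invoke the fact that compatibility is inherited by subcollections of rooted triples: if some rooted phylogenetic $X$--tree $T$ displayed every rooted triple in $R$, then it would in particular display every rooted triple in $R' \subseteq R$, making $R'$ compatible and contradicting the choice of $R'$. Hence no such $T$ exists, i.e. $R$ is not compatible. Since $||R|| = \tau$, this shows that $\tau$ is not phylogenetically flexible, which establishes the contrapositive and hence the lemma.

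I do not anticipate a genuine obstacle here; the argument is essentially bookkeeping. The only points that need a little care are checking that the enlarged collection $R$ really satisfies $||R|| = \tau$ (so that it legitimately witnesses the failure of flexibility for $\tau$), and noting that ``$T$ displays $R$'' immediately implies ``$T$ displays $R'$'' whenever $R' \subseteq R$, which is immediate from the definition of a tree displaying a set of rooted triples.
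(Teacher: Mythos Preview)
Your proof is correct and is exactly the straightforward verification the paper has in mind; the paper itself does not spell out a proof, merely noting that the hereditary property is ``straightforward to check.'' Your contrapositive argument---extending an incompatible $R'$ with $||R'||=\tau'$ to an $R$ with $||R||=\tau$ by choosing arbitrary rooted triples on the remaining leaf sets, and then observing that any tree displaying $R$ would display $R'$---is precisely the intended bookkeeping.
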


\section{Characterisation result}
We can now state our first main result.

\begin{thm}
\label{main}
Suppose that $\tau$ is a non-empty subset of $\binom{X}{3}$. Then  $\tau$ is phylogenetically flexible if and only if $\tau$ is thin.  
\end{thm}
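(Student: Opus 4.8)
The plan is to analyse, for a set $R$ of rooted triples, its \emph{Aho graph}: the graph on vertex set $L(R)$ whose edges are the pairs $\{a,b\}$ over all triples $ab|c\in R$. Two elementary facts drive both directions. First, if the Aho graph of $R$ is connected and $|L(R)|\ge 2$, then $R$ is incompatible: in any rooted phylogenetic tree displaying $R$, partition the leaves according to the subtrees hanging below the children of the root; for every triple $ab|c\in R$ the pair $\{a,b\}$ cannot straddle two parts (else $\mathrm{lca}(a,b)$ would be the root, which is never a \emph{strict} ancestor of $\mathrm{lca}(a,c)$), so all edges of the Aho graph lie within the parts of a nontrivial partition of $L(R)$ --- contradicting connectedness. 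Second, if the Aho graph of $R$ is disconnected, one can build a tree displaying $R$ by restricting $R$ to the triples whose three leaves all lie in a single component (the discarded triples, having their cherry pair in one component and third leaf in another, are automatically displayed), recursing on each component, and gluing the resulting trees under a common new root.

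For the direction ``$\tau$ thin $\Rightarrow$ $\tau$ phylogenetically flexible'' I would induct on $|L(\tau)|$. Let $R$ be any set of rooted triples with $||R||=\tau$. Its Aho graph has at most $|\tau|$ edges on $|L(\tau)|$ vertices, and Lemma~\ref{lemthin}(i) gives $|\tau|\le|L(\tau)|-2$, so the Aho graph is disconnected. For each component $C$, the restricted triple set $R_C$ satisfies $||R_C||=\tau_C$ for some $\tau_C\subseteq\tau$, hence $\tau_C$ is thin, with $|L(\tau_C)|\le|C|<|L(\tau)|$; by the inductive hypothesis $R_C$ is compatible, and gluing these trees under a new root (attaching any vertices of $C$, or of $X$, that appear in no triple as extra children of the root) yields a rooted phylogenetic $X$--tree displaying $R$, by the second fact above. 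Since $R$ was arbitrary, $\tau$ is phylogenetically flexible; the base cases ($\tau=\emptyset$, or $|L(\tau)|\le 2$) are immediate.

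For the converse I would prove the contrapositive. If $\tau$ is not thin, choose an inclusion-minimal non-thin subset $\tau'\subseteq\tau$; by Lemma~\ref{obs1} it suffices to show $\tau'$ is not phylogenetically flexible. Minimality yields two structural facts. (a) $\x(\tau')=-1$, i.e.\ $|\tau'|=|L(\tau')|-1$: if $\x(\tau')\le-2$ then deleting any $S\in\tau'$ leaves a proper non-empty subset $\sigma$ with $\x(\sigma)\le\x(\tau')+1<0$, contradicting minimality. (b) The $3$-uniform hypergraph $\tau'$ is connected: otherwise $L(\tau')=A\sqcup B$ with $\tau'=\tau'_A\sqcup\tau'_B$, $L(\tau'_A)=A$, $L(\tau'_B)=B$, both non-empty hence thin, and the identity $\x(\tau')=\x(\tau'_A)+\x(\tau'_B)+2$ forces $\x(\tau')\ge 2$. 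The goal is now to choose, for each $S=\{x,y,z\}\in\tau'$, a pair $f(S)\subseteq S$ so that $\{f(S):S\in\tau'\}$ is a spanning tree of $L(\tau')$; orienting $S$ as $xy|z$ whenever $f(S)=\{x,y\}$ then produces an $R$ with $||R||=\tau'$ whose Aho graph is exactly this spanning tree, hence connected on $|L(\tau')|\ge 3$ vertices, hence $R$ is incompatible by the first fact, so $\tau'$ is not flexible.

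To produce $f$, I would invoke Rado's theorem (the matroid form of Hall's theorem) in the cycle matroid $M$ of the complete graph on $L(\tau')$, with the three-edge set $A_S:=\binom{S}{2}$ associated to each $S\in\tau'$. Rado's condition requires $\mathrm{rank}_M\!\big(\bigcup_{S\in J}A_S\big)\ge|J|$ for every $J\subseteq\tau'$. The edge set $\bigcup_{S\in J}A_S$ is a union of triangles whose graphic rank is $|L(J)|-c(J)$, where $c(J)$ is the number of connected components of the sub-hypergraph $J$ and $L(J)=\bigcup_{S\in J}S$; writing $J$ as the disjoint union of its components $\sigma_1,\dots,\sigma_{c(J)}$, the inequality becomes $\sum_i\big(\x(\sigma_i)+1\big)\ge 0$, which holds because each $\sigma_i$ is either a proper --- hence thin --- subset of $\tau'$ (so $\x(\sigma_i)\ge 0$) or equals $\tau'$ (so $\x(\sigma_i)=-1$ by (a)). Thus an independent transversal exists, and having $|\tau'|=|L(\tau')|-1=\mathrm{rank}(M)$ elements it is a spanning tree, giving $f$. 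The main obstacle is precisely this last argument: seeing that ``orient the triples to be incompatible'' reduces to finding a spanning tree transversal, and that the minimality of $\tau'$ is exactly what makes Rado's condition hold.
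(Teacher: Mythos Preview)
Your argument is correct in both directions, with one minor verbal slip: in your ``first fact'' you write that the root ``is never a \emph{strict} ancestor of $\mathrm{lca}(a,c)$'', but you mean \emph{descendant} (displaying $ab|c$ requires $\mathrm{lca}(a,b)$ to lie strictly below $\mathrm{lca}(a,c)$, which fails when $\mathrm{lca}(a,b)=\rho$). The intended reasoning and conclusion are fine.

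For the ``if'' direction your recursive BUILD-style construction is equivalent in content to the paper's proof, which applies the criterion of Bryant--Steel directly: for \emph{every} subset $S\subseteq X$ with $|S|\ge 2$, the graph $[R,S]$ has at most $|\tau'|$ edges (where $\tau'=||R_S||$) and at least $|L(\tau')|\ge|\tau'|+2$ vertices, hence is disconnected. The paper's version avoids the explicit recursion by invoking the full characterisation at once, but the underlying counting is the same.

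For the ``only if'' direction your route is genuinely different from the paper's. The paper argues by induction on $|\tau|$: once every proper subset is thin, it assumes $|L(\tau)|<|\tau|+2$, deduces $L(\tau')=L(\tau)$ for any one-element deletion $\tau'$, and then imports the median-representation theorem of Dress--Steel to obtain a bijection $\mathrm{med}_T:\tau'\to\mathring V(T)$; a separate technical lemma (Lemma~\ref{aho}) shows that the induced rooted triple set makes $[R',X]$ have exactly two components, and adding one further rooted triple connects it. Your argument instead isolates an inclusion-minimal non-thin $\tau'$, extracts $\x(\tau')=-1$ and hypergraph connectivity from minimality, and then produces the incompatible $R$ in one stroke via Rado's theorem in the cycle matroid of $K_{L(\tau')}$: the Rado condition $\mathrm{rank}\big(\bigcup_{S\in J}\binom{S}{2}\big)=|L(J)|-c(J)\ge|J|$ rewrites as $\sum_i(\x(\sigma_i)+1)\ge 0$ over the components $\sigma_i$ of $J$, which holds by minimality. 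The payoff is self-containment---you avoid both the external median theorem and Lemma~\ref{aho}---at the cost of invoking a matroid transversal theorem. The paper's route, on the other hand, ties the result more tightly to the tree-median picture that is used elsewhere in the paper, and does not need matroid machinery.
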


The `if' direction of Theorem~\ref{main} can be established by applying Theorem 1.1 of \cite{gru12}; however, we give a shorter and more direct proof of this direction here (as well as establishing the converse). 
We begin with some preliminary results, which are required for the argument.

Given a rooted phylogenetic tree $T=(V,E)$ with leaf set $X$ 
and every vertex in $\Vr(T)-\{\rho_T\}$
having degree three. We say that a rooted
triple $xy|z$ {\em supports} 
a vertex $v$ in $T$ if $xy|z$ is displayed by $T$ and $v ={\rm  lca}_T(x,y)$. 

For a set $R$ of rooted triples on $X$, put $L(R)=\bigcup_{t\in R} L(t)$.
Furthermore, for a non-empty subset $S$ of $X$, let $[R, S]$ be the 
graph with vertex set $S$ and with an edge $\{a,b\}$ if and only if there exists
a  rooted triple $ab|c \in R$ for at least one element $c \in S$. 
By \cite[Theorem~2]{bry}, $R$ is compatible if and only if the graph $[R, S]$ is disconnected
for all subsets $S$ of $X$ of size at least 2.

\begin{lemma}\label{aho}
	Suppose that $T$ is a rooted binary phylogenetic $X$-tree,
	that $R$ is a set of rooted triples with  $L(R) = X$,  and that each rooted triple supports 
	a unique (interior non-root) vertex in $T$.
	Then the graph $[R, X]$ has precisely two connected components.
\end{lemma}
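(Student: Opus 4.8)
The plan is to analyze the structure of $[R,X]$ directly using the correspondence between rooted triples and interior vertices. Since $T$ is binary and has leaf set $X$ with $|X| = n$, it has exactly $n-1$ interior vertices, one of which is the root $\rho_T$; thus there are $n-2$ interior non-root vertices. By hypothesis each rooted triple in $R$ supports a unique such vertex, so $|R| \le n-2$, and hence $[R,X]$ has at most $n-2$ edges on $n$ vertices, which already forces it to have at least two connected components. The real work is to show it has \emph{exactly} two, i.e. that $[R,X]$ is connected after the removal of a single vertex — equivalently, that $[R,X]$ is a forest with exactly two components, or at any rate has precisely two.

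First I would set up the map sending each rooted triple $ab|c \in R$ to the vertex $v = \mathrm{lca}_T(a,b)$ it supports, which by assumption is a non-root interior vertex of degree three; this map is injective by hypothesis. For such a vertex $v$, its two child subtrees partition the leaves below $v$, and $a,b$ lie in different child subtrees while $c$ lies outside the subtree rooted at $v$ (since $v = \mathrm{lca}(a,b)$ but $v \neq \mathrm{lca}(a,c)$). The key idea is to root the analysis at the tree structure: I would argue that the edges of $[R,X]$, one per non-root interior vertex in the image of the map, can be oriented or organized so that contracting along them mirrors contracting the corresponding edges of $T$. Concretely, associate to the vertex $v$ the edge $e_v$ of $T$ joining $v$ to its parent; as $v$ ranges over the $n-2$ non-root interior vertices, the edges $e_v$ are exactly the $n-2$ internal edges of $T$ together with... — more carefully, the non-root interior vertices are in bijection with the non-pendant, non-root-incident structure, and contracting all of them collapses $T$ to a star, so the "full" edge set (all $n-2$ of them) would give a connected graph minus exactly the right amount. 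Since $R$ may be a proper subset, I would induct on $n-2-|R|$, the number of non-root interior vertices \emph{not} supported by any triple in $R$.

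For the \textbf{base case} $|R| = n-2$ (every non-root interior vertex is supported), I would show $[R,X]$ is connected: pick any non-root interior vertex $v$ with both children being leaves (a "cherry" vertex — such a vertex exists), the triple supporting it gives an edge between those two leaves, so I can contract that cherry in $T$ and delete the corresponding triple, reducing to a smaller tree where the contracted pair becomes a single leaf; by induction on $n$ the resulting graph is connected, hence so is $[R,X]$. This shows that when no interior vertex is "missing", we get \emph{one} component; each missing non-root interior vertex can be shown to increase the component count by at most (and, with care, exactly) one, via an argument that removing the triple at $v$ either disconnects a piece corresponding to the subtree at $v$ or not, depending on whether other triples bridge across $e_v$. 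The cleanest route: prove that $[R,X]$ always has at least $n - |R|$ components (it has $n$ vertices and $\le |R|$ edges, giving $\ge n - |R|$ components for any graph), and separately that it has \emph{at most} $n - |R|$ components by exhibiting, for each pair of leaves, a path in $[R,X]$ whenever $|R| = n-2$; combined with $|R| = n - 2$ forced by injectivity into the $n-2$ non-root interior vertices, both bounds give exactly two. \textbf{The main obstacle} is the upper bound on the number of components — showing connectivity (up to the unavoidable deficiency) — since an arbitrary subset $R$ need not support all vertices; here I expect to need the tree-contraction induction above, carefully tracking how a leaf set splits when we pass to a child subtree of $T$, and ensuring the Bryant–Steel graph-disconnectedness criterion quoted above is used consistently to glue the inductive pieces.
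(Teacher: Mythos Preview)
Your plan contains two genuine errors.

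First, you have misread the hypothesis. The condition that ``each rooted triple supports a unique (interior non-root) vertex'' is intended (and is used in the paper's own proof) as a \emph{bijection} between $R$ and $\mathring{V}(T)\setminus\{\rho_T\}$, not merely an injection: every non-root interior vertex $v$ is supported by some triple in $R$. You treat $|R| < n-2$ as a live case and propose inducting on $n-2-|R|$; but if $|R| < n-2$ then $[R,X]$ has $n$ vertices and at most $n-3$ edges, hence at least three components, so the lemma's conclusion is simply false in that regime. Your later remark that ``$|R| = n-2$ [is] forced by injectivity'' is incorrect---injectivity alone only gives $|R|\le n-2$.

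Second, your base case is misstated. You write that when $|R|=n-2$ you will show $[R,X]$ is \emph{connected}; but a graph on $n$ vertices with at most $n-2$ edges is never connected. The target is exactly two components, not one, and your cherry-contraction induction (even if repaired to track ``two components'' rather than ``connected'') then needs an extra argument that the edge $\{a,b\}$ you add back, together with the redistribution of edges incident to the merged leaf, does not change the component count.

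The paper's argument sidesteps both issues by working inside subtrees. For each non-root interior vertex $v$, let $X_v$ be the leaf set of the subtree rooted at $v$. One proves by induction on $|X_v|$ that the subgraph of $[R,X]$ induced on $X_v$ is connected: for $|X_v|=2$ the (necessarily existing) triple supporting $v$ supplies the edge between its two leaves; for larger $X_v$ the two child sets $X_{v_1},X_{v_2}$ are connected by induction and are joined by the edge contributed by the triple supporting $v$. Applying this to the two children $u,w$ of $\rho_T$ shows that $X_u$ and $X_w$ each induce connected subgraphs, and no edge of $[R,X]$ can cross between them since such an edge would come from a triple with $\mathrm{lca}$ equal to $\rho_T$, which is excluded. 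Your cherry-contraction idea is a cousin of this, but the subtree induction localises the connectivity claim precisely and makes the surjectivity hypothesis do exactly the work it must at each step.
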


\begin{proof}
For $v\in \Vr(T)-\{\rho_T\}$, let $X_v$ be the leaf set of the 
rooted subtree of $T$ 
with root $v$. 
We claim that for every such $v$, the graph induced by $[R,X]$ on 
$X_v$ is connected. The lemma then follows immediately by considering 
the graphs induced by $[R,X]$ on $X_u$, $X_w$ for $u$ and $w$ the 
children of the root of $T$.

To prove the claim, for $u$ (a child of the root $\rho_T$ of $T$),  we consider the following set:
$$
X^u=\{X_v  \,:\, v \mbox{ is an internal vertex of $T$ below or equal to } u\},
$$
where $v$ is said to be {\em below} $u$ if $u$ lies on the
path from $\rho_T$ to $v$.
Note that since $|X|\geq 3$, there must exist a child $u$ of $\rho_T$ 
such that $X^u\not=\emptyset$ and also there exists some
vertex $v\in V(T)$ below or equal to $u$ such that $|X_v|\geq 2$.
We use induction on $|X_v|$ for $X_v$ in $X^u$. If $|X_v|=2$, then both children of $v$ are leaves and
the lemma holds  because if $X_v = \{p,q\}$, 
then, by assumption, there exists a rooted triple
in $R$ of the form $r|pq$ for some $r\in X-\{p,q\}$ that supports $v$. Hence, there is an edge $\{p,q\}$ in $[R,X]$ and therefore the graph induced by $[R,X]$ on $X_v$ is connected.

Now suppose that $v$ 
is an internal vertex of $T$ below or equal to $u$ 
such that $|X_v|\geq 3$. Then at least one of the two 
children $v_1$ and $v_2$ of $v$ is not a leaf of $T$. 
Without loss of generality, we may assume
that $v_1$ is that child. Therefore,  $2\leq |X_{v_1}|< |X_v|$
and so, by induction, the graph induced by 
$[R,X]$ on $X_{v_1}$ is connected.  If $v_2$ is not a 
leaf of $T$, then the same arguments as before imply that
the graph induced by 
$[R,X]$ on $X_{v_2}$ is also connected. If $v_2$ is a
leaf of $T$, then the graph $[R,X]$ on $v_2$ is a vertex
and therefore is (trivially) connected. Since, by assumption,
there exists a rooted triple in $R$ that supports $v$, 
there is an  edge $\{y,z\}$ in $[R,X]$ with $y \in X_{v_1}$ and
$z \in X_{v_2}$. Hence the graph induced by $[R,X]$ on $X_v$ is connected.
\hfill$\Box$ \end{proof}

{\em Proof of Theorem~\ref{main}}

We first establish the `if' direction. Suppose that $\tau$ is thin, and let
$R$ be a set of rooted triples with leaves chosen from $X$ with $||R||=\tau$.   We show that any such choice of  $R$ is compatible. 

We will establish the compatibility of $R$ via the aforementioned
characterisation that $R$ is compatible if and only if $[R, S]$ is disconnected
for all subsets $S$ of $X$ of size at least 2. 
To that end, let $S$ be a subset of $X$ of size at least two.

Notice that $[R, S] = [R_S, S]$ where $R_S$ is the subset of those rooted triples in $R$ 
that have all three of their leaves in $S$. 
Let $\tau'= ||R_S||$.
Since $\tau$ is thin, we have $\x(\tau') \geq 0$, in other words:
\begin{equation}
\label{leq}
|L(\tau')|-|\tau'| \geq 2.
\end{equation}
Now (i) the number of vertices of $[R,S]$ is $|S|$ 
and $|S| \geq |L(\tau')|$; and 
(ii) the number of edges of $[R,S]$ is at most $|R_S| = |\tau'|$. 
Thus, by Inequality (\ref{leq}),  the number of vertices of $[R,S]$ minus the number of edges of this graph is at least 2. 
But any finite graph with this property must be disconnected. 
Since this holds for all subsets $S$ of $X$ of size at least two it follows that $R$ is compatible. 

We turn now to the `only if' direction.

We use induction on $|\tau|$. 
If $|\tau| = 1$ then $\tau$ is clearly thin.   So, 
suppose the `only if' direction  holds for all $\tau'\subset {X\choose 3}$ with 
$1\leq |\tau'|<m$, some $m \ge 2$, and let 
$\tau\subseteq {X\choose 3}$ such that $|\tau|=m$. 
Without loss of generality we may assume that $X = L(\tau)$.

Suppose that $\tau'$ is a non-empty proper subset of $\tau$. 
By Lemma~\ref{obs1}, $\tau'$ is
phylogenetically flexible. Hence by induction, $\tau'$ is thin. 
Thus, $|L( \tau')| \ge |\tau'| + 2$. To show that $\tau$ is thin,
it therefore suffices to prove that $|L(\tau)| \ge |\tau| + 2$.

Suppose for the purposes of obtaining a contradiction that 
$|L( \tau)| < |\tau| + 2$. Let $\{x,y,z\} \in \tau$ and set $\tau' = \tau - \{\{x,y,z\}\}$. Then, as $\tau'$ is thin by induction, 
\begin{equation}
|\tau| +2>
|L(\tau)| = |L(\tau')| + (3 - |L(\tau') \cap \{x,y,z\}|) \ge |\tau| + 4 - |L(\tau') \cap \{x,y,z\}|.
\end{equation}
Hence $|L(\tau')\cap \{x,y,z\}|>2$ and, so,
$\{x,y,z\}\subseteq L(\tau')$. Thus, $L(\tau')=X$.

Now, since 
$\tau'$ is thin, there exists a (unrooted) phylogenetic 
tree $T=(V,E)$ with leaf set $X$,
and all vertices in $\Vr(T)$ of degree 3, for 
which the map ${\rm med}_T:\tau'\to \Vr(T)$ 
is one-to-one \cite{dre09} (see also 
Section~3  below). 
We claim that the map ${\rm med}_T$ must in fact be
bijective. Suppose that this is not the case. Then there exists some $v\in \Vr(T)$
such that ${\rm med}_T(s)\not=v$, for all $s\in \tau'$.
Hence, $|X|-2=|\Vr(T)|>|\tau'|$ and, so, $|X|-1>|\tau|$.
But then $|X|+1>|\tau|+2>|L(\tau)|=|X|$, which is impossible as $|\tau|+2$ is an integer.
Hence ${\rm med}_T$ is a bijection as claimed.

Now, root the tree $T$ by inserting a root vertex $\rho$ into an edge 
which separates $x,y$ from $z$, when the edge is removed from $T$.
Let $R'$ be a set of rooted triples induced by
the map ${\rm med}_T$ (for each element $\{a,b,c\}$ in $\tau'$,  ${\rm med}_T$ maps to some $v \in \Vr(T)$ so that we 
get a rooted triple with leaf set $\{a,b,c\}$ which supports $v$ in the 
rooted version of $T$) with $||R'||=\tau'$ and $L(R') = X$. Since ${\rm med}_T$ is a bijection, $R'$ 
satisfies the conditions of Lemma~\ref{aho} for the rooted version of $T$. 
Hence the graph $[R',X]$ has two connected components, 
one that contains $x,y$ in its vertex set and the other that contains $z$.

Now consider the set of rooted triples $R = R'  \cup \{y|zx\}$. 
Then $L(R)=X$, $[R,X]$ is connected and so $R$ is not 
compatible, and $||R|| = \tau$. But this is impossible, since $\tau$ is phylogenetically flexible.
\hfill$\Box$\\

The following corollary of Theorem~\ref{main} is now immediate from Lemma~\ref{lemthin}(i).

\begin{corollary}
\label{corby}
If  a  non-empty subset $\tau$ of $\binom{X}{3}$ is phylogenetically flexible, then $|\tau| \leq n-2$ where $n=|X|$.
\end{corollary}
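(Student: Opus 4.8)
The plan is to derive Corollary~\ref{corby} directly from the two results already in hand, with essentially no new work. First I would observe that if $\tau$ is phylogenetically flexible then by Theorem~\ref{main} it is thin; this is the only substantive input.

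Next I would reduce to the situation covered by Lemma~\ref{lemthin}(i). That lemma is stated for $\tau \subseteq \binom{X}{3}$ with $L(\tau) = X$, whereas Corollary~\ref{corby} as written speaks of an arbitrary non-empty $\tau \subseteq \binom{X}{n}$ (sic: $\binom{X}{3}$) with $n = |X|$, without assuming $L(\tau) = X$. So I would set $X' = L(\tau)$ and apply the lemma with $X'$ in place of $X$ and $r = 3$, giving $|\tau| \le |X'| - 3 + 1 = |L(\tau)| - 2 \le |X| - 2 = n - 2$, using $|L(\tau)| \le |X|$. (If one prefers to read the corollary as already assuming $L(\tau) = X$, this reduction step is unnecessary and the bound is immediate.)

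There is really no obstacle here: the content is entirely in Theorem~\ref{main} and Lemma~\ref{lemthin}, and the corollary is just the composition ``flexible $\Rightarrow$ thin $\Rightarrow$ $|\tau| \le n - r + 1$'' specialised to $r = 3$. The only thing to be a little careful about is the $L(\tau)$ versus $X$ bookkeeping noted above, and the trivial check that $\tau$ non-empty is exactly the hypothesis under which Lemma~\ref{lemthin}(i) was proved (it came from taking $\tau' = \tau$ in the definition of thin, which requires $\tau$ non-empty). Accordingly I would present the proof in one or two sentences: invoke Theorem~\ref{main} to get thinness, then invoke Lemma~\ref{lemthin}(i) (applied to $L(\tau)$) to get $|\tau| \le |L(\tau)| - 2 \le n - 2$.
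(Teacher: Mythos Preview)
Your proposal is correct and matches the paper's own proof, which simply states that the corollary is ``immediate from Lemma~\ref{lemthin}(i)'' given Theorem~\ref{main}. Your extra care in applying Lemma~\ref{lemthin}(i) to $X' = L(\tau)$ rather than $X$ (to handle the hypothesis $L(\tau)=X$ of that lemma) is a reasonable bookkeeping detail that the paper leaves implicit.
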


We end this section by considering how many trees can display a set of rooted triples $R$ when $||R||$ is phylogenetically flexible.  It might be suspected that since the overlap between the leaf sets of the trees in $R$ is sparse, the number of trees displaying $R$ would need to be large. Indeed, this is sometimes the case; for example, suppose that the leaf sets in $R$ are all disjoint, so the total number of leaves is given by $n=3k$, where $k=|R|$. In this case, the number $N$ of rooted binary trees on $n$ leaves that display $R$ is given by:
\begin{equation}
\label{expeq}
N = \frac{(2n-3)!!}{3^{n/3}}, 
\end{equation}
which grows exponentially with $n$.
The proof of Eqn.~(\ref{expeq}) is to observe that each of the $3^k$ ways to select a rooted triple from the $k$ triples in $||R||$ provides a set of rooted triples that is displayed by at least one rooted phylogenetic tree (by the algorithm from \cite{aho}) and hence by at least one rooted binary tree, and these rooted binary trees are pairwise distinct, since any two of them display a different rooted triple for at least one triple in $||R||$.

At the other extreme, if $R$ has the maximum possible size for a phylogenetically flexible set on $n$ leaves (namely $n-2$ by Corollary~\ref{corby}), then it is possible for there to be  just a single rooted phylogenetic tree that displays 
$R$; this  is stated more precisely in the next proposition.   
\begin{proposition}
\label{propmike}
\mbox{}
\begin{itemize}
\item[(i)]  For every rooted binary phylogenetic $X$--tree $T$ on $n\geq 3$ leaves, there exists a set $R_T$ of $n-2$ rooted triples for which (a) $T$ is the only phylogenetic $X$-tree that displays $R_T$ and (b) 
$||R_T||$ is thin. 
\item[(ii)] There exist phylogenetically flexible sets of triples of size $n-2$ on $n$ leaves ($n \geq 6$) for which each assignment of a tree structure to these triples leads to a set of rooted triples that can be displayed by more than one rooted phylogenetic tree.
\end{itemize}
\end{proposition}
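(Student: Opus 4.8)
For part (i), the plan is to build $R_T$ by choosing, for each interior non-root vertex $v$ of $T$, a single rooted triple that supports $v$; since $T$ has $n$ leaves and is binary, it has $n-1$ interior vertices, one of which is the root, giving $n-2$ triples. Concretely, for an interior non-root vertex $v$ with children subtrees having leaf sets $A$ and $B$, pick leaves $a \in A$, $b \in B$, and a third leaf $c$ that lies outside the subtree rooted at $v$ (such a $c$ exists because $v$ is not the root), and include the rooted triple $ab|c$ in $R_T$. To see that $T$ is the \emph{only} phylogenetic $X$-tree displaying $R_T$, I would argue that these $n-2$ rooted triples already pin down every cluster of $T$: the triple $ab|c$ supporting $v$ forces $\mathrm{lca}(a,b)$ to be strictly below $\mathrm{lca}(a,c)=\mathrm{lca}(b,c)$ in any displaying tree, and running this over all $v$ recovers the full nested cluster structure of $T$ (equivalently, one can invoke the \cite{aho} algorithm and check it outputs exactly $T$ with no unresolved vertices). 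For thinness of $||R_T||$, I would apply Theorem~\ref{main}: since $T$ is the unique tree displaying this particular set $R_T$, that by itself does not give flexibility, so instead I would verify the excess condition directly. The key combinatorial observation is that the triples can be ordered following a bottom-up traversal of $T$ so that each successive triple introduces the ``new'' apex leaf or, more carefully, so that any sub-collection $\tau'$ of $k$ of the leaf sets uses at least $k+2$ leaves — this should follow because the supported vertices are distinct interior vertices and a set of $k$ distinct interior vertices of a binary tree ``sees'' at least $k+2$ leaves in the appropriate sense. The main obstacle in part (i) is making this last counting argument clean; I expect to prove it by induction on $k$, peeling off a triple whose supported vertex is maximal (closest to the root) among those in $\tau'$, so that its third leaf $c$ is new relative to the rest.

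For part (ii), the goal is the opposite: exhibit a thin $\tau$ of size $n-2$ such that \emph{every} choice of rooted triples $R$ with $\|R\|=\tau$ is displayed by at least two distinct rooted binary phylogenetic $X$-trees. The plan is to take $\tau$ to be thin but ``loosely connected'' — for instance the $4$-set example from the introduction, $\tau=\{\{a,b,c\},\{c,d,e\},\{b,e,f\},\{a,d,f\}\}$ on $n=6$ leaves (here $n-2=4$), or an $n$-leaf analogue obtained by extending this cyclic pattern. Since $\tau$ is thin, Theorem~\ref{main} guarantees every such $R$ is compatible, so there is at least one displaying tree $T$; the task is to produce a second one. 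I would argue that in any tree $T$ displaying such an $R$, because the union of the leaf sets has size $6$ and only $4$ triples constrain it, $T$ has at least $6-2=4$ interior vertices but the triples support at most $4$ of them, so the ``supported'' interior structure underdetermines $T$ — there is always at least one interior vertex (or edge) whose local resolution is not forced, and flipping it yields a second displaying tree with the same $\|R\|$. Equivalently, one shows that no single rooted binary tree on $6$ leaves can be the unique displayer here because the set $R$ always leaves a degree of freedom.

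The step I expect to be the genuine obstacle is part (ii): showing that the freedom is present for \emph{every} assignment $R$, uniformly, rather than just for some. A careful way to do this is to fix any $R$ with $\|R\|=\tau$, take a binary tree $T$ displaying it (exists by Theorem~\ref{main}), and locate an interior edge $e$ of $T$ such that the four ``quartet'' leaves straddling $e$ do not all appear together in any triple of $R$; contracting and re-expanding $e$ the other way gives $T'\neq T$ still displaying $R$. The existence of such an edge is where the sparse-overlap structure of $\tau$ (its cyclic, no-common-pair pattern) must be used — I would prove it by a short case analysis on where the cherries of $T$ sit relative to the four leaf sets of $\tau$, using that any pair of leaves lies together in at most one set of $\tau$ and $L(\tau)$ has two more elements than $|\tau|$. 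Generalising from $n=6$ to all $n\ge 6$ is then just a matter of padding $\tau$ with additional triples that each introduce two fresh leaves attached near a leaf of the core gadget, which preserves thinness (by the bottom-up ordering argument) and preserves the non-uniqueness (the core freedom is untouched).
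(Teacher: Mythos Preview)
Your construction in part~(i) has a genuine gap: assigning to each interior non-root vertex $v$ an arbitrary supporting triple $ab|c$ (with $c$ anywhere outside the subtree at $v$) does \emph{not} in general force $T$ to be the unique displaying tree. Take $T$ on $X=\{1,2,3,4,5\}$ with clusters $\{1,2\}$, $\{3,4\}$, $\{3,4,5\}$. Following your recipe one may choose $R_T=\{12|3,\ 35|1,\ 34|1\}$ (for the vertex ${\rm lca}(3,4)$ the leaf $c=1$ is outside its subtree, so this is a legal choice). But the tree $T'$ with clusters $\{1,2\}$, $\{3,5\}$, $\{3,4,5\}$ also displays all three triples, so condition~(a) fails. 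The sentence ``running this over all $v$ recovers the full nested cluster structure of $T$'' is exactly where the argument breaks: the triple $34|1$ only tells you ${\rm lca}(3,4)$ lies below the root, not that $\{3,4\}$ is a cluster. What is needed is a constraint on the choice of $c$ --- the paper sidesteps this entirely by inducting on $n$: peel off a cherry $\{a,b\}$, take $R_{T'}$ for the smaller tree $T'=T\!\setminus\! a$ by induction, and adjoin the single triple $ab|c$ with $c$ chosen in the \emph{sibling} subtree of the cherry. Uniqueness is then immediate (restricting any displaying tree to $X\setminus\{a\}$ gives $T'$, and $ab|c$ pins $a$ to the edge incident with $b$), and thinness is a one-line check since $a$ is a fresh element. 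Incidentally, thinness \emph{does} hold for your construction regardless of the choices, but not for the reason you sketch (your ``maximal $v$ has a new $c$'' claim is false in the same example: all three triples share $c=1$); it follows instead from the Dress--Steel median criterion quoted in Section~3, since your triples have pairwise distinct medians in $T$ by design.

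For part~(ii) your strategy is workable but the paper's example is far simpler: take $\tau=\{\{1,2,j\}:3\le j\le n\}$. This is visibly thin (each triple adds a new element), and since every triple contains the pair $\{1,2\}$, no assignment $R$ ever constrains two leaves of $\{3,\ldots,n\}$ relative to one another; with $n\ge 6$ there are at least four such leaves, and a short case check shows some pair of them can always be interchanged or regrouped without violating any triple in $R$. Your cyclic $\tau$ on six leaves may also work, but the NNI argument you outline needs more than ``the quartet leaves straddling $e$ do not all appear together in any triple'': a rooted triple $xy|z$ can be destroyed by an NNI on an edge far from $x,y,z$ if that move changes ${\rm lca}(x,y)$ relative to ${\rm lca}(x,z)$, so the case analysis you anticipate is more delicate than stated.
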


\begin{proof}
(i) We use induction on $n$.  For $n=3$, we can write $T=ab|c$, in which case $R_T=\{ab|c\}$ satisfies Conditions (a) and (b). Suppose now that Proposition~\ref{propmike} holds for $k\leq n$ where $n \geq 3$, and that $T$ is a rooted binary phylogenetic $X$--tree with $n+1$ leaves.  Select a  pair of leaves $a, b$ that are adjacent to the same vertex (say $v$) of $T$ (i.e. $\{a,b\}$ is a cherry of  $T$),  let vertex $u$ be the parent of  vertex $v$ in $T$, and  let $c$ be any leaf of $T$ present in the component of $T-u$ (the graph obtained by deleting $u$ from $T$)  that contains neither the root, nor the leaves $a, b$. 
Put
$X'=X-\{a\}$
and let $T'$ be the rooted binary phylogenetic $X'$-tree obtained from $T$ by deleting leaf $a$ and its incident edge, and suppressing the resulting vertex of degree 2. Since $T'$ has $n$ leaves, the induction hypothesis ensures  that there is a set $R_{T'}$ of $n-2$ rooted triples for which $T'$ is the only phylogenetic $X'$-tree that displays $R_{T'}$ and  that $||R_{T'}||$ is thin.  If we now let $R_T = R_{T'} \cup\{ab|c\}$, then $R_T$ is  a set of $(n+1)-2$ rooted triples and $R_T$ satisfies Conditions (a) and (b) for the tree $T$. This establishes the induction step and thereby the proposition.

(ii) Let $\tau = \{\{1,2,j\}: 2<j\leq n\}$. In this case,  $\tau$ is a thin (and therefore phylogenetically flexible) set of size $n-2$.
Now,  for $n\geq 6$, it can be checked that any assignment of a tree structure to these triples leads to a set of rooted triples that can be displayed by more than one rooted phylogenetic tree.
\hfill$\Box$ \end{proof}

\section{Median characterisations}

Given a phylogenetic tree $T$ with leaf set $X$ and a set $s \in \binom{X}{3}$, let
${\rm med}_T(s)$ refer to the vertex that is the unique median vertex of $T$ for the three elements of $s$.

The following result was established in
 \cite[Theorem 1.1]{dre09}. Suppose that 
 $\tau$  is a subset of $\binom{X}{3}$ with 
$L(\tau) = X$.  The following are equivalent:
\begin{itemize}
\item[(i)] $\tau$ is thin.
\item[(ii)] There exists a binary unrooted phylogenetic $X$--tree $T=(V,E)$
for which the function 
${\rm med}_T:\tau \to \Vr(T)$: 
$s \mapsto {\rm med}_T(s)$ from the elements $s$ of $\tau$ to the set of interior  vertices of $T$ is one-to-one.
\end{itemize}
When (ii) holds, we say that $T$ provides a {\em median representation} of $\tau$.
Fig. ~\ref{fig3}(i) illustrates how this equivalence applies. 

\begin{figure}[htb]
\centering
\includegraphics[scale=0.7]{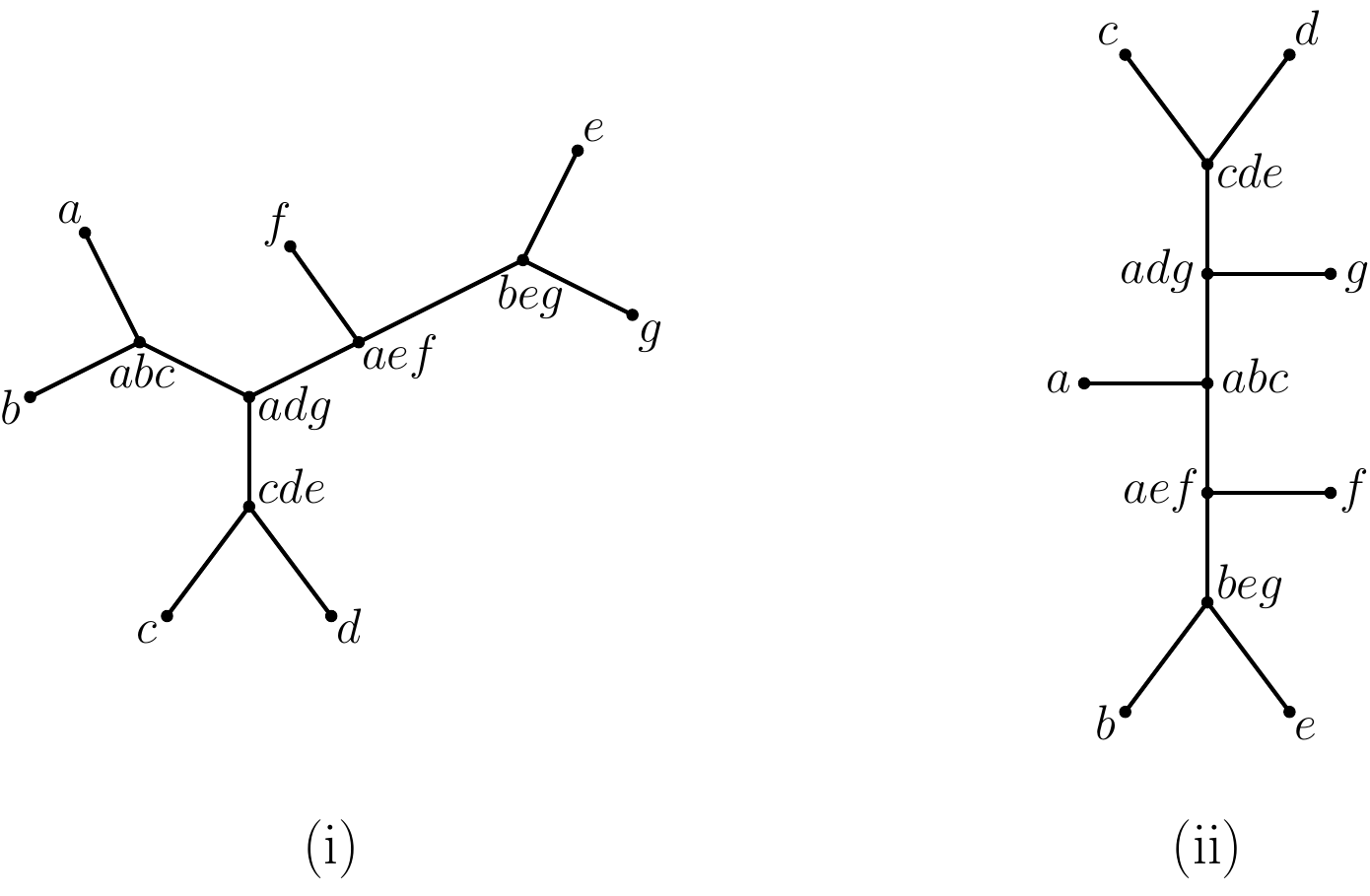}
\caption{(i) Associating each member of the thin collection of sets  $\{\{a,b,c\},   \{c, d, e\},  \{a,e, f\}, \{b, e, g\}, \{a, d, g\}\}$  with its median vertex in the tree shown provides a one-to-one mapping. (ii) A caterpillar tree that also  provides a median representation of this thin collection of sets.}
\label{fig3}
\end{figure}

We now strengthen this result from \cite {dre09} by showing that  the tree $T$ can always be chosen to be an unrooted caterpillar tree.  For example, for the thin collection of sets considered in Fig.~\ref{fig3}, we may select the caterpillar tree shown in Fig.~\ref{fig3}(ii). 

\begin{thm}
	\label{main3}
	Suppose $\tau$ is a non-empty subset of $\binom{X}{3}$, where $|X| \geq 4$.
	If $\tau$ is thin, then there exists an 
unrooted \underline{caterpillar}
tree $T=(V,E)$ with leaf set $X$ 
for which the function 
${\rm med}_T:\tau \to \Vr(T)$ is one-to-one.
\end{thm}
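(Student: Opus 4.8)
The plan is to restate the theorem in terms of linear orders on $X$. An unrooted caterpillar on $X$ is exactly the data of a linear order $\prec$ on $X$ (read along the spine, with the two end‑cherries becoming the two $\prec$‑extremal pairs), and for three leaves with $\prec$‑order $a\prec b\prec c$ the median vertex ${\rm med}_T(\{a,b,c\})$ is the interior vertex incident with the middle leaf $b$. The map sending each non‑$\prec$‑extremal element of $X$ to the interior vertex incident with it is a bijection onto $\Vr(T)$, and the middle element of a triple is never $\prec$‑extremal; hence ${\rm med}_T$ is one‑to‑one on $\tau$ iff the map $m_\prec\colon\tau\to X$ sending $\{a,b,c\}$ to its $\prec$‑median (i.e.\ its middle element) is one‑to‑one. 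So, for $|X|\ge4$, Theorem~\ref{main3} is equivalent to: every thin $\tau$ admits a linear order $\prec$ on $X$ with $m_\prec$ one‑to‑one. One may moreover assume $L(\tau)=X$, since elements outside $L(\tau)$ can be inserted into $\prec$ afterwards (indeed $m_\prec(\{a,b,c\})$ depends only on $\prec$ restricted to $\{a,b,c\}$).

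I would prove this by induction on $|\tau|$; the cases $|\tau|\le1$ are trivial. For the step: if some $x\in X$ has $n_\tau(x)=1$ (in particular if $\tau$ has any degree‑$1$ vertex), fix such an $x$ and let $S_0=\{x,a,b\}$ be its triple; apply induction to the thin family $\tau\setminus\{S_0\}$, then re‑insert $x$ (and any element of $\{a,b\}$ absent from $\tau\setminus\{S_0\}$) so that $x$ is $\prec$‑between the other two elements of $S_0$. Then $m_\prec(S_0)=x$ is a fresh value, so $m_\prec$ is one‑to‑one on $\tau$. Otherwise every vertex has degree at least $2$, and Lemma~\ref{lemthin}(ii) gives $x$ with $n_\tau(x)=2$; let $S_1,S_2$ be its two triples. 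If $S_1\cap S_2=\{x,a\}$ has two elements, contract: put $\hat\tau=(\tau\setminus\{S_1,S_2\})\cup\{\{a,b_1,b_2\}\}$ with $S_i=\{x,a,b_i\}$. A short computation with the excess function shows $\hat\tau$ is thin — since $x$ lies only in $S_1,S_2$, deleting it drops the support by exactly one, which offsets replacing two triples by one (for $\sigma\ni\{a,b_1,b_2\}$ apply thinness of $\tau$ to $(\sigma\setminus\{\{a,b_1,b_2\}\})\cup\{S_1,S_2\}$). Let $\prec'$ be the order from induction on $X\setminus\{x\}$ and $c^\ast=m_{\prec'}(\{a,b_1,b_2\})\in\{a,b_1,b_2\}$; since $c^\ast$ is $\prec'$‑between the other two of that triple, a short check of the three subcases for $c^\ast$ shows $x$ can be slotted into $\prec'$ just on one side of $c^\ast$ so that $\{m_\prec(S_1),m_\prec(S_2)\}=\{x,c^\ast\}$, both values fresh (for $c^\ast$ by the injectivity from induction, for $x$ as $x\notin X\setminus\{x\}$), with $m_\prec$ unchanged on $\tau\setminus\{S_1,S_2\}$.

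The hard case is $n_\tau(x)=2$ with $S_1\cap S_2=\{x\}$ — and it is unavoidable, since the thin family $\{\{a,b,c\},\{c,d,e\},\{b,e,f\},\{a,d,f\}\}$ is of this shape for \emph{every} choice of $x$. Here no contraction to a single triple is available. Because every vertex has degree at least $2$, the support of $\tau''=\tau\setminus\{S_1,S_2\}$ is still $X\setminus\{x\}$, so $\tau''$ has slack: $|\tau''|=|\tau|-2\le|L(\tau'')|-3$ by Lemma~\ref{lemthin}(i). After recursing on $\tau''$, one can always insert $x$ so that exactly one of $m_\prec(S_1),m_\prec(S_2)$ equals $x$ — the $\prec'$‑intervals spanned by the disjoint pairs $\{a_1,b_1\}$ and $\{a_2,b_2\}$ cannot each contain the other's interior — but the other triple's median is then forced to be an endpoint of its pair, which must have been arranged in advance to be a non‑median of $\tau''$. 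My intended remedy has two ingredients: first, strengthen the inductive hypothesis so that, given slack, the recursive construction can steer a prescribed vertex to be $\prec$‑extremal (hence a non‑median); second, where this is obstructed — which happens only for rigid star‑like subconfigurations $\{\{u,w,p\},\{u,w,q\}\}$ that pin the prescribed vertex as a median — re‑select the reduction vertex to be $u$, whose two triples share the pair $\{u,w\}$, so that the shared‑element case above applies. Making this exchange argument precise — pinning down exactly which configurations are rigid and checking that they always supply such an alternative reduction vertex — is the step I expect to be the main obstacle; the rest of the argument is routine bookkeeping, and translating back to caterpillars at the end (when $|X|\ge4$) is immediate from the first paragraph.
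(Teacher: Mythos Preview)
Your linear-order reformulation is correct and makes the argument pleasantly concrete; the cases $n_\tau(x)=1$ and $n_\tau(x)=2$ with $|S_1\cap S_2|=2$ are handled essentially as in the paper (which inducts on $|X|$ rather than $|\tau|$, an immaterial difference). In the shared-pair case, note incidentally that the replacement triple $\{a,b_1,b_2\}$ is automatically absent from $\tau$: otherwise $\{S_1,S_2,\{a,b_1,b_2\}\}$ would be three triples on the four elements $\{x,a,b_1,b_2\}$, contradicting thinness.

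The gap is precisely where you locate it, but your proposed remedy heads in the wrong direction. In the case $S_1\cap S_2=\{x\}$ you delete both triples and recurse on $\tau''=\tau\setminus\{S_1,S_2\}$; this throws away the reserved slot that the contraction would have provided, and forces you into an unformalised strengthening of the induction plus an exchange argument whose obstruction analysis you leave open. The paper simply reuses the contraction trick from the shared-pair case. Writing $S_1=\{a,b,x\}$ and $S_2=\{a',b',x\}$, replace $S_1,S_2$ by a single \emph{bridging} triple $t^*\in\binom{\{a,a',b,b'\}}{3}$ not already in $\tau$; thinness applied to the five elements $\{x,a,a',b,b'\}$ shows at most one of the four candidate triples can lie in $\tau$, so such a $t^*$ exists (relabel so that $t^*=\{a,a',b\}$). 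The family $\hat\tau=(\tau\setminus\{S_1,S_2\})\cup\{t^*\}$ is thin by the very computation you already wrote down: for $\sigma\ni t^*$, compare with $(\sigma\setminus\{t^*\})\cup\{S_1,S_2\}\subseteq\tau$, using $x\notin L(\sigma)$. After recursing on $\hat\tau$, insert $x$ (and $b'$, if it dropped out of $L(\hat\tau)$) so that $\{m_\prec(S_1),m_\prec(S_2)\}=\{x,\,m_{\prec'}(t^*)\}$; both values are fresh, since $x$ is new and $m_{\prec'}(t^*)$ was used only by $t^*$, now removed. Verifying that such an insertion point exists is a finite case analysis on the relative $\prec'$-positions of $a,a',b,b'$, entirely parallel to your three-subcase check in the shared-pair situation, and requires no strengthened hypothesis or exchange argument.
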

\begin{proof}
  We adapt the proof of (3) $\Rightarrow$ (2) of \cite[Theorem 1.1]{dre09},  and use induction on the size of $X$. If $|X|=4$ the theorem clearly holds
  in view of Lemma~\ref{lemthin}(i). Let us suppose that  it holds 
whenever $4\leq |X|\leq n-1$, for some $n\geq 5$. Let $X$ be such that $|X|=n$.
By Lemma~\ref{lemthin}(ii), we may assume that 
one of the following two cases hold:
\begin{itemize}
\item[(A)] There is an element  $x$ of $X$ with $n_{\tau}(x)=1$.  
\item[(B)] There is an element  $x$ of $X$ with $n_{\tau}(x)=2$.
\end{itemize}
In case (A) there is some triple
$\{a,b,x\} \in \tau$ such that for $\tau' = \tau - \{\{a,b,x\}\}$ we have
that $\tau'$ is thin. Put $X'=L(\tau')$.  By induction, there is an 
unrooted caterpillar tree $T'$ with leaf-set $X'$ 
and the function ${\rm med}_{T'}: \tau'\to \Vr(T')$ 
is one-to-one. Now we can create a tree $T$ by inserting 
an edge $\{x,u\}$ where $u$ is a new vertex subdividing
an interior edge of $T'$ on the path 
between $a$ and $b$. The resulting tree $T$ is clearly a
unrooted caterpillar tree on $X$ 
and  ${\rm med}_T:\tau \to \Vr(T)$ is one-to-one. This
establishes the induction step in this case.\\

In Case (B) there is an element $x$ in $X$ with  $n_{\tau}(x)= 2$.
Then there exist two distinct triples $t, t'\in \tau$ each of
which contains $x$. We 
consider the following two possible cases:  (i) $|t\cap t'|=2$ and (ii) $|t\cap t'|=1$. \\
{\bf Case (i):}   $|t\cap t'|=2$. In this case, there
exist $a,b,b'\in X$ with $b\not= b'$ such that
$t=\{a,b,x\}$ and $t'=\{a,b',x\}$.
Since $\tau $ is thin, it follows that 
$$
\tau' = \tau - \{\{a,b,x\}, \{a,b',x\}\} \cup \{\{a,b,b'\}\}
$$ 
is also thin. Put $X'=L(\tau')$. Then, by induction, 
there is a unrooted caterpillar tree $T'$ with leaf-set $X'$ 
and ${\rm med}_{T'}:\tau'\to  \Vr(T')$ is one-to-one.

Consider the leaf $b'$ of $T'$. Let $b''\in \Vr(T')$ 
denote the vertex adjacent to
$b'$.  As $T'$ is an unrooted caterpillar tree, it suffices to
consider the following two subcases:\\

{\bf Subcase (a):} The leaves $a$ and $b$ are on the same side of 
$T'$ relative
to $b'$ (i.e. they are in the same connected component 
of $T'-b''$ as $b'$). 
Without loss of generality, assume that the distance from $a$ to $b'$ in 
$T'$ is less than or equal to distance from $b$ to $b$ in $T'$. 
Note that in this case ${\rm med}_{T'}(a,b,b')$ is the 
vertex in $T'$ that is adjacent to $a$. Now create a tree $T$
with leaf set  $X$ by inserting a new vertex $u$ and a new edge $\{u,x\}$ into $T'$ 
such that $\{u,b''\}$ is an edge on the 
path connecting  $b'$ and $a$. The tree 
$T$ is again an unrooted  caterpillar tree on $X$. 
Furthermore, ${\rm med}_T:\tau\to \Vr(T)$
is one-to-one since (i) $\rm {\rm med}_{T'}$ is one-to-one,
and (ii) ${\rm med}_T(x,a,b')=u$  
and the median of $\{x,a,b\}$ in $T$
corresponds to the median vertex of $\{a,b,b'\}$ in $T'$ 
and therefore is a vertex of $T$ that is different from
any other median vertex of an element in $\tau$.

{\bf Subcase (b):} The leaves $a$ and $b$ are on different sides of
$T'$ relative to $b'$. Note that in this case,  ${\rm med}_{T'}(a,b,b')=b''$. 
Now  create a tree $T$
with leaf set $X$ by inserting a new vertex $u$ and a new
edge $\{x,u\}$ into $T'$ such that $\{u,b''\}$ is an edge 
on the path connecting  $b'$ and $b$.
$T$ is then clearly an unrooted  caterpillar tree on
$X$.
Since ${\rm med}_T(x,b',b)=u$ and the median of $\{x,a,b'\}$ in $T$
corresponds to the median vertex of $\{a,b,b'\}$ in $T'$, the same
arguments as in the previous case imply that
${\rm med}_T:\tau \to \Vr(T)$ is one-to-one.

{\bf Case (ii):}   $|t\cap t'|=1$. In this case, there exist
pairwise distinct element $a, a', b, b'$ in $X$ such that 
$t=\{a,b,x\}$ and $t'=\{a',b',x\}$. We may assume 
that $\tau$ does not contain both 
$\{a,a',b\}$ and $\{a,a',b'\}$ as, otherwise, the claim follows from
Case~(B)(i)(a).
By symmetry, we can assume without loss of generality that $\{a,b',b\}$ 
is not in $\tau$.
Let $\tau' = \tau - \{\{a,b,x\}, \{a',b',x\}\} \cup \{\{a,a',b\}\}$. Then
since $\tau$ is thin it follows that $\tau'$ is thin.
Put  $X'=L(\tau')$. Then, by induction, 
there is an unrooted  caterpillar tree $T'$ with leaf-set $X'$  and
${\rm med}_{T'}:\tau' \to \Vr(T')$ is one-to-one.

Consider the leaf $a'$. 
As $T'$ is a caterpillar tree on $X'$, 
we can again consider two subcases ((a) and (b)), the first of which involves two further subcases:\\

{\bf Case (a):} The leaves $a$ and $b$ are on the same side of $T'$ relative
to $b'$. Without loss of generality, assume that the distance
 from $a$ to $b'$ in 
$T'$ is less than or equal to distance from $b$ to $b'$ in $T'$. 
We now have two subcases to 
consider for this subcase:


{\bf Case (a1):} The leaf $a'$ is on the same side of the caterpillar $T'$
as $a$ and $b$ relative to $b'$. If $a$ and $b$ are on the same side of
 $T'$ relative $a'$ then the same arguments as in the
Case~(B)(i)(a) apply with
$a'$ playing the role of $b'$.  If $a$ and $b$ are on 
different sides of $T'$ relative $a'$ then the same arguments 
apply as in  Case~(B)(i)(b) with $a'$ playing the role of $b'$.


{\bf Case (a2):} The leaf $a'$ is on a different side of the caterpillar $T'$
from $a$ and $b$ relative to $b'$. Now create a tree $T$
on $X$ by inserting a new vertex $u$ and a new edge $\{x,u\}$ 
into $T'$  such that with $b''\in \Vr(T)$ the vertex adjacent
with $b'$ we have that $\{b'',u\}$ is an edge 
on the path connecting  $a'$ and $b'$. Then
$T$ is clearly a unrooted caterpillar tree with leaf
set $X$.
Since ${\rm med}_T(x,a',b')$ is $u$ and the median of $\{x,a,b\}$ in $T$
corresponds to the median of $\{a,a',b\}$ in $T'$, 
the same arguments as in Case~(B)(i)(a) imply that
${\rm med}_T:\tau \to \Vr(T)$ is one-to-one.

{\bf Case (b):} The leaves $a$ and $b$ are on different sides of 
$T'$ relative
to $b'$.  If $a'$ lies on the same side of $T'$ as $a$ relative $b'$
and $a'$ and $b'$ lie on different sides of $T'$ relative $a$
then the same arguments as in the
Case~(B)(i)(a) apply with $a'$ playing the role of $b'$. In
all other cases the same arguments as in the  Case~(B)(i)(b)
apply with $a'$ playing the role of $b'$

%
\hfill$\Box$ \end{proof}

\subsection{The case $r=2$}\label{r2}

The concept of phylogenetic flexibility does not directly carry over to the case where $r=2$, since in this case, there is just a single rooted phylogenetic tree.  Instead, we use a stronger notion of tree structure (namely, total order) to obtain an analogue of Theorem~\ref{main}. 

We say that a non-empty subset $\tau$ of $\binom{X}{2}$ is {\em total-order flexible} if every choice of a  total order on the set  $s$, for each $s\in \tau$, is compatible with a total order on $X$. 
More formally, for every $s=\{x,y\} \in \tau$, if we declare  that either $x\prec y$ or $y\prec x$, then for any such selection of choices (one for each $s \in \tau$), there is a total order on $X$
that agrees with these inequalities. For example, $\tau= \{\{a,b\}, \{b, c\}\}$ is total-order flexible but $\{\{a,b\}, \{b, c\}, \{a,c\}\}$ is not, since the orderings $a\prec b, b \prec c, c \prec a$ are not compatible with any total order on $a,b,c$.
The following result is 
the analogue of Theorem~\ref{main} for the case where $r=2$

\begin{thm}
	Suppose that $\tau$ is a non-empty subset of $\binom{X}{2}$. Then   $\tau$ is thin if and only if $\tau$ is total-order flexible.
\end{thm}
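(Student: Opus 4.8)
The plan is to translate both properties into statements about the graph $G = (L(\tau), \tau)$ having vertex set $L(\tau)$ and taking each $2$-element member of $\tau$ as an edge (legitimate since $\tau \subseteq \binom{X}{2}$ is a set of distinct loopless pairs). For $r = 2$ the excess simplifies to $\x(\tau') = |L(\tau')| - |\tau'| - 1$, so $\tau$ is thin precisely when every non-empty subset $\tau' \subseteq \tau$ is incident with at least $|\tau'| + 1$ vertices; since a graph contains a cycle if and only if some non-empty set $F$ of its edges is incident with at most $|F|$ vertices, this says exactly that $\tau$ is thin if and only if $G$ is a forest.

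First I would prove ``thin $\Rightarrow$ total-order flexible''. Assume $G$ is a forest and fix, for each $s = \{x,y\} \in \tau$, one of the inequalities $x \prec y$ or $y \prec x$; orient the edge of $G$ accordingly to obtain a directed graph $D$ on vertex set $L(\tau)$. Because $G$ has no cycles and $\tau$ contains no repeated pair, $D$ has no directed cycle of any length, hence is acyclic and admits a topological ordering of $L(\tau)$. Any total order on $X$ that restricts to this topological order on $L(\tau)$ (for instance, placing all elements of $X \setminus L(\tau)$ after those of $L(\tau)$) agrees with every chosen inequality, so $\tau$ is total-order flexible.

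Next I would prove the contrapositive of ``total-order flexible $\Rightarrow$ thin''. If $\tau$ is not thin, pick a non-empty $\tau' \subseteq \tau$ with $|L(\tau')| \le |\tau'|$. Then the subgraph of $G$ with edge set $\tau'$ has at least as many edges as vertices and so contains a simple cycle $x_1, x_2, \ldots, x_k, x_1$ with $k \ge 3$ and each pair $\{x_i, x_{i+1}\}$ (indices modulo $k$) lying in $\tau$. Declaring $x_1 \prec x_2$, $x_2 \prec x_3$, \ldots, $x_{k-1} \prec x_k$, $x_k \prec x_1$ produces a collection of choices that no total order on $X$ can satisfy, since transitivity would force $x_1 \prec x_1$. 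Hence $\tau$ is not total-order flexible, completing the proof.

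I expect no real obstacle here: the substance is the forest reformulation together with the classical facts that an orientation of a forest is acyclic (and so has a topological order) and that an edge set at least as large as its vertex set contains a cycle. The only points needing a little care are ruling out a length-$2$ directed cycle in $D$ (using that $\tau$ is a set, so each unordered pair yields a single, uniquely oriented edge) and the trivial extension of a total order from $L(\tau)$ to all of $X$.
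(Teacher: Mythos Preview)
Your proof is correct. The ``not thin $\Rightarrow$ not total-order flexible'' direction matches the paper's argument essentially verbatim: find a subset $\tau'$ with $|L(\tau')| \le |\tau'|$, extract a cycle, and orient it cyclically.

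For the other direction you take a genuinely different route. The paper argues by induction on $|\tau|$: invoking Lemma~\ref{lemthin}(ii), it finds an element $x$ with $n_\tau(x)=1$, deletes the unique pair $\{x,y\}$ containing it, applies the inductive hypothesis to the smaller thin set, and then inserts $x$ into the resulting total order immediately before or after $y$ as required. Your argument instead first identifies ``thin'' with ``$G=(L(\tau),\tau)$ is a forest'' via the excess formula, orients the forest according to the given choices, observes that any orientation of a forest is automatically a DAG, and reads off a topological order. Your approach is more self-contained (no appeal to Lemma~\ref{lemthin}(ii), no induction) and makes the graph-theoretic content explicit from the start; the paper, by contrast, establishes the forest characterisation only afterwards, as a separate corollary derived from a theorem of Lov\'asz and Plummer on bipartite graphs with positive surplus. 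What the paper's inductive argument buys is that it mirrors the structure used elsewhere in the paper (peeling off low-degree elements), keeping the proofs methodologically uniform.
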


\begin{proof}
	We first show that if $\tau$ is not thin, then $\tau$ is not 
total-order flexible. Suppose that $\tau$ is not thin. Then there exists 
a nonempty subset $\tau'$ of $\tau$ for which $|L(\tau')| \leq |\tau'|$. 
Let $G_{\tau'}$ be the graph $(L(\tau'), \tau')$ that has vertex set 
$L(\tau')$ and edge set $\tau'$.  Since $G_{\tau'}$ has at least as many 
edges as vertices, this graph has a connected component that contains 
a cycle. If the edges of this cycle are $\{x_1, x_2\}, \cdots, 
\{x_i, x_{i+1}\}, \cdots  ,\{x_r, x_1\}$, then the total orders 
$x_1\prec x_2, \cdots, x_i \prec x_{i+1}, \cdots , x_r \prec x_1$ on these 
pairs are not compatible with any total order on $X$ (since transitivity 
would imply that $x_1\prec x_1$).

	We now show that the thin property implies total-order flexibility 
by using induction on $k=|\tau|$.  The result clearly holds for $k=1$ so 
suppose that  the result holds for subsets of $\binom{X}{2}$ of size 
$k\geq 1$ and that $\tau \subseteq \binom{X}{2}$ is a thin set of size $k+1$.   By 
Lemma~\ref{lemthin}(ii), there is an element $x$ in $X$  that  is present 
in precisely one set, say $\{x,y\}$, in $\tau$. Let $\tau'$ be the set 
obtained from $\tau$ by deleting $\{x,y\}$.  Then $\tau'$ is thin and, since
	$|\tau'|=k$, the induction hypothesis implies that $\tau'$ is 
total-order flexible.  Then any choice of a total order on the set $s$ 
for each $s \in \tau'$ is compatible with a total order $\prec$ on $X-\{x\}$ 
(recall that $x\not\in L(\tau')$ by the choice of $x$).
	If we now introduce a total order on $\{x,y\}$, then we can extend 
the total order $\prec$ to $X$ by placing $x$ after $y$ if $\{x,y\}$ is 
ordered as $x,y$, and placing $x$ after $y$ otherwise. 
\hfill$\Box$ \end{proof}

We now present some characterizations for when a non-empty set $\tau \subseteq \binom{X}{2}$ is thin.
We begin with an analogue of \cite[Theorem 1.1]{dre09}, which was stated in the last section.

Given a rooted tree $T$ with leaf set $X$ and a set $s \in \binom{X}{2}$,  let 
${\rm lca}_T(s)$ refer to the vertex that is the unique  vertex of $T$ that is 
the least common ancestor of the elements in the set $s$.

\begin{thm}
	Suppose that $\tau$  is a subset of $\binom{X}{2}$ with $L(\tau) = X$. The following are equivalent:
	\begin{itemize}
		\item[(i)] $\tau$ is thin.
		\item[(ii)] There exists a  rooted binary phylogenetic  $X$--tree  $T=(V,E)$ for which the function $s \mapsto {\rm lca}_T(s)$ from the elements of $\tau$ to the set of interior vertices of $T$ is one-to-one.
		\item[(iii)] 
		As for (ii) but with $T$ a rooted caterpillar tree.
	\end{itemize}
\end{thm}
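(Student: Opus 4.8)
The plan is to establish the cycle of implications (i) $\Rightarrow$ (iii) $\Rightarrow$ (ii) $\Rightarrow$ (i). The implication (iii) $\Rightarrow$ (ii) is immediate, since a rooted caterpillar tree is by definition a rooted binary phylogenetic $X$--tree. For (ii) $\Rightarrow$ (i), I would use a restriction-and-counting argument: given a tree $T$ as in (ii) and a non-empty subset $\tau'$ of $\tau$, the restriction $T' := T|_{L(\tau')}$ is a rooted phylogenetic tree on leaf set $L(\tau')$, hence has at most $|L(\tau')|-1$ interior vertices; restriction preserves least common ancestors of retained leaves, so $s \mapsto {\rm lca}_{T'}(s) = {\rm lca}_T(s)$ maps $\tau'$ injectively into $\Vr(T')$ (an lca of a $2$-element set is never a leaf). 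Thus $|\tau'| \leq |L(\tau')|-1$, i.e. $\x(\tau') \geq 0$; as $\tau'$ was arbitrary, $\tau$ is thin. (One can avoid citing the restriction operation by proving directly, by induction on $|L(\tau')|$, that $\{{\rm lca}_T(\{a,b\}) : a,b \in L(\tau'),\ a \neq b\}$ has at most $|L(\tau')|-1$ elements.)

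The substance lies in (i) $\Rightarrow$ (iii). First I would note that if $\tau$ is thin then the graph $G_\tau := (X,\tau)$ is a forest: a cycle in $G_\tau$ through vertices $x_1,\dots,x_r$ would yield a subset $\tau' \subseteq \tau$ (namely its $r$ edges) with $|L(\tau')| = r = |\tau'|$, so $\x(\tau') = -1 < 0$, contradicting thinness (this is essentially the argument used in the first half of the proof of the preceding $r=2$ theorem). Next, root each connected component of $G_\tau$ at an arbitrary vertex, and pick a linear order $\ell_1,\dots,\ell_n$ of $X$ (with $n = |X|$) that, within each component, lists vertices by strictly decreasing distance to the component's root, with ties broken arbitrarily and the components concatenated in any order. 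Since every edge of $G_\tau$ joins a vertex to its parent, each $\ell_i$ has at most one neighbour in $G_\tau$ of larger index --- its parent, if any. Finally, let $T$ be the rooted caterpillar $X$--tree with leaves hung along a spine in the order $\ell_1,\dots,\ell_n$: it has interior vertices $v_1,\dots,v_{n-1}$, with $v_1$ the root, $v_i$ having children $\ell_i$ and $v_{i+1}$ for $1 \leq i \leq n-2$, and $v_{n-1}$ having children $\ell_{n-1}$ and $\ell_n$. Then $T$ has exactly one cherry, so it is a rooted caterpillar, and a direct check shows ${\rm lca}_T(\{\ell_i,\ell_j\}) = v_{\min(i,j)}$. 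Hence $s \mapsto {\rm lca}_T(s)$ is injective on $\tau$: two distinct pairs of $\tau$ with the same image $v_i$ would force $\ell_i$ to have two $G_\tau$--neighbours of index larger than $i$, which is impossible.

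I expect the only real obstacle to be routine bookkeeping: verifying that the spine construction yields a genuine rooted caterpillar (exactly one cherry, all interior out-degrees equal to $2$) and that the chosen ordering really does have the ``at most one larger neighbour'' property. Once that property is in hand, injectivity of the lca map is immediate. An alternative to the explicit construction would be an induction on $|X|$ using Lemma~\ref{lemthin}(ii) to peel off an element $x$ lying in exactly one pair $\{x,y\}$, build a caterpillar for $\tau - \{\{x,y\}\}$ on $L(\tau - \{\{x,y\}\})$, and then re-attach $x$ (and $y$, if $y$ also becomes unused) as new pendant leaves at the top of the spine so as to place ${\rm lca}_T(\{x,y\})$ at a fresh vertex; but the forest-based construction above seems cleaner.
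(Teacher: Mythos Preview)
Your proof is correct, and for two of the three nontrivial implications it takes a genuinely different route from the paper's.

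For (ii) $\Rightarrow$ (i), the paper adjoins a new element $x$ as a leaf at the root, passes to the unrooted tree, and invokes the $r=3$ result of Dress--Steel to conclude that $\tau^* = \{t \cup \{x\}: t \in \tau\}$ is thin, from which thinness of $\tau$ follows. Your restriction-and-counting argument is self-contained and more elementary: it avoids any appeal to the $r=3$ theory. For (i) $\Rightarrow$ (iii), the paper proceeds by induction on $|X|$, using Lemma~\ref{lemthin}(ii) to peel off an element $x$ with $n_\tau(x)=1$ and re-attach it (and possibly its partner) above the old root --- essentially the alternative you sketch at the end of your proposal. Your primary argument instead observes up front that $G_\tau$ is a forest (a fact the paper only derives \emph{after} this theorem, as a corollary via the Lov\'asz--Plummer surplus characterisation), and then gives an explicit one-shot caterpillar construction from a depth-compatible ordering of the forest. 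This buys you a cleaner, non-inductive proof; the paper's approach, by contrast, keeps the argument parallel to the $r=3$ case and to Theorem~\ref{main3}, which may be pedagogically preferable in context.
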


\begin{proof}

	(iii) $\Rightarrow$ (ii) is trivial.

	(i) $\Rightarrow$ (iii) Suppose $\tau$ is thin. We use induction on the size of $X$. 
	If $|X|=3$, then clearly (iii)  holds. Therefore,  suppose it holds 
	whenever $3\leq |X|\leq n-1$, for  some $n\geq 4$. Let $|X|=n$.

	By  Lemma~\ref{lemthin}(ii), there is some $x$ with $n_{\tau}(x)=1$.   
	Let $X' = X - \{x\}$. It is then straightforward to see that 
	there is some pair  $\{a,x\} \in \tau$ with 
$\tau' = \tau - \{\{a,x\}\}$. Clearly $\tau'$ is thin as $\tau$ is thin.
	and either $L (\tau') = X-\{x\}$ or $L (\tau') = X-\{x,a\}$.

Assume first that $L (\tau') =X'$, where $X'= X-\{x\}$.
By induction, there is a 
	rooted caterpillar tree $T'$ with leaf-set $X'$ and root $\rho'$ for which the function ${\rm lca}_{T'}: \tau'\to \Vr(T')$ 
	is one-to-one. Now,  we can create a new rooted tree $T$ with root $\rho$ by 
	adding two new edges $\{\rho,\rho'\}$ and $\{\rho,x\}$ to $T$ 
	where $\rho$ is a new vertex that is not in $T'$.
	$T$ is then clearly a rooted caterpillar tree on $X$, 
	every vertex in $\Vr(T)$ has out-degree 2
	and  ${\rm lca}_T:\tau \to \Vr(T)$ is one-to-one. This
	establishes the induction step, and so (iii) holds.

		Assume next that $L (\tau') = X'$, where $X'=X-\{x,a\}$.
By induction, there exists a rooted caterpillar tree
$T'$ on $X'$. Let $\rho'$ denote the root of $T'$. Let $T$ be rooted
caterpillar tree obtained from $T'$ via the following 2 step
process. First, add a new root $\rho$ and a new edge
$e=\{\rho,\rho'\}$ to $T'$. In the resulting tree subdivide
$e$ by a vertex $c$ and add the edges $\{c,a\}$ and
$\{\rho,x\}$. Clearly, ${\rm lca}_T:\tau \to \Vr(T)$ is one-to-one.
This establishes again the induction step, and so (iii)
holds too in this case.

	(ii) $\Rightarrow$ (i)  Suppose $x$ is an element which is not in 
	$L(\tau) = X$. Given  a non-empty subset $\omega$ of $\tau$,
	 let $\omega^*= \{ t \cup \{x\} \, : \, t \in \omega \}$.

	Suppose  a rooted phylogenetic  tree $T$ on $X$
	 satisfies the conditions in Part (ii) of 	the theorem. Add a new
	leaf $x$ that is not in $L(\tau)$ to $T$ by adding the edge
	$\{\rho_T,x\}$ and regard the resulting tree as an unrooted phylogenetic tree $T'$
	on $X \cup \{x\}$. In $T'$, the map 
	${\rm med}_{T'}$ from $\tau^*$ to the internal vertices of $T'$ is then one-to-one. 
	Hence  by \cite[Theorem 1.1]{dre09},  $\tau^*$ is thin and thus for any non-empty subset
	$\omega$ of $\tau$, we have:
	$$
	|L( \omega) | + 1 = |L( \omega^*)| \ge | \omega^*| + 2 = |\omega| + 2.
	$$
	It immediately follows that $\tau$ is thin.

\hfill$\Box$ \end{proof}

Interestingly, we can give an  alternative characterisation of thin subsets $\tau$ of $\binom{X}{2}$
in terms of bipartite graphs.

We first recall some results from matching theory.
For a graph $G$ and $v$ a vertex in $G$, we let ${\rm deg}_G(v)$ denote the degree of $v$ in $G$.
Given a bipartite graph $G=(A\cup B,E)$ and a non-empty set $Y \subseteq A$, 
we let $N_G(Y)$ denote the set of vertices in $B$ that are adjacent to some vertex in $Y$,
and we define the 
{\em surplus} $\sigma_G(Y)$  of $Y$ to be:
$$
\sigma_G(Y) =  \sigma(Y) = |N_G(Y)| - |Y|.
$$
We also define the {\em surplus} $\sigma(G)$ of $G$, to be the
minimum surplus over all non-empty sets of $A$. We 
say that a bipartite graph $G=(A\cup B,E)$ has {\em positive surplus (as viewed from $A$)} if $\sigma(G)>0$.
The following result is  from Lov$\grave{\mbox{a}}$sz and Plummer \cite[Theorem 1.3.8]{LP}. 

\begin{thm}\label{forest}
	A bipartite graph $G=(A\cup B,E)$ has positive surplus
	(as viewed from $A$) if and only if $G$ contains a forest $F$ such that ${\rm deg}_F (u) = 2$ for
	all $u \in  A$.  
\end{thm}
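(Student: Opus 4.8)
The plan is to prove Theorem~\ref{forest} by a standard argument combining Hall's theorem (in its ``defect'' form) with an induction that extracts a forest $F$ with every $A$-vertex of degree exactly $2$. It is convenient to treat the two directions separately, the ``only if'' direction being the substantive one.

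\textbf{The easy direction.} First I would show that if $G$ contains such a forest $F$, then $\sigma(G)>0$. Take any non-empty $Y\subseteq A$, and consider the subforest $F[Y\cup N_F(Y)]$ obtained by restricting $F$ to $Y$ together with its $F$-neighbours. Every vertex of $Y$ has degree $2$ in this subforest, so it has at least $2|Y|$ edges incident to $Y$; hence the subforest has at least $2|Y|$ edges. Since a forest on vertex set $Y\cup N_F(Y)$ has at most $|Y|+|N_F(Y)|-1$ edges, we get $2|Y|\le |Y|+|N_F(Y)|-1$, i.e. $|N_G(Y)|\ge |N_F(Y)|\ge |Y|+1$, so $\sigma(Y)\ge 1$. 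As $Y$ was arbitrary, $\sigma(G)\ge 1>0$.

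\textbf{The hard direction.} Now suppose $\sigma(G)>0$; I want to build the forest $F$. The key observation is that positive surplus from $A$ is exactly Hall's condition with one unit of slack: $|N_G(Y)|\ge |Y|+1$ for all non-empty $Y\subseteq A$. The plan is to use induction on $|A|$. If $|A|=1$, say $A=\{a\}$, then $a$ has at least $2$ neighbours in $B$; pick any two of them and let $F$ consist of those two edges. For the inductive step, pick any $a\in A$; since $\deg_G(a)\ge 2$ I can select two distinct neighbours $b_1,b_2\in B$ of $a$. I would like to delete $a,b_1,b_2$, apply induction to the smaller graph $G'=(A'\cup B',E')$ with $A'=A\setminus\{a\}$, $B'=B\setminus\{b_1,b_2\}$, get a forest $F'$ with every $A'$-vertex of degree $2$, and set $F=F'\cup\{ab_1,ab_2\}$. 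Adding $a$ with its two new edges to the forest $F'$ cannot create a cycle (the two new edges go to the two freshly-added isolated vertices $b_1,b_2$, which have degree $\le 1$ in $F$), so $F$ is indeed a forest with the required degree property. The one thing that must be checked is that $G'$ still has positive surplus from $A'$, and here is the subtle point: deleting \emph{two} $B$-vertices could in principle drop the surplus of some $Y\subseteq A'$ by $2$, destroying the condition. So the real work is choosing $a$ (or choosing $b_1,b_2$) cleverly so that positive surplus is preserved.

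\textbf{The main obstacle, and how to handle it.} The crux is the surplus-preserving choice. The standard device (this is essentially how \cite[Theorem 1.3.8]{LP} is proved) is: if $G$ has surplus $\ge 2$ everywhere --- i.e. $|N_G(Y)|\ge |Y|+2$ for all non-empty $Y\subseteq A$ --- then we have room to spare: pick any $a\in A$ and any two neighbours $b_1,b_2$; for any $Y\subseteq A'$ non-empty, $|N_{G'}(Y)|\ge |N_G(Y)|-2\ge |Y|$, but in fact we must ensure it is $\ge|Y|+1$; more carefully, if $b_1$ or $b_2$ is not in $N_G(Y)$ then we lose at most $1$, giving $\ge|Y|+1$; if both are in $N_G(Y)$ use surplus $\ge 2$ to lose at most back to $\ge |Y|$ --- so this simple-minded approach needs refinement. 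The clean route is instead to peel off a ``tight'' set. Let $Y_0$ be a minimal non-empty subset of $A$ with $\sigma(Y_0)=\sigma(G)$; by minimality one shows $Y_0$ has a ``Hall-like'' rigidity: the bipartite graph induced on $Y_0\cup N_G(Y_0)$ has the property that every proper non-empty subset of $Y_0$ has strictly larger surplus, which forces (via Hall) the existence of a matching saturating $Y_0$ and in fact one extra ``free'' vertex in $N_G(Y_0)$. One then handles $Y_0$ directly --- build on $Y_0\cup N_G(Y_0)$ a forest with all $Y_0$-vertices of degree $2$ using a near-perfect matching plus one extra edge, exploiting $|N_G(Y_0)|=|Y_0|+\sigma(G)\ge|Y_0|+1$ --- contracts/deletes $Y_0\cup N_G(Y_0)$, and checks the remainder still has positive surplus from $A\setminus Y_0$ (this last check uses submodularity of $Y\mapsto|N_G(Y)|$: for $Z\subseteq A\setminus Y_0$, $|N_G(Z\cup Y_0)|+|N_G(Z\cap Y_0)|\le|N_G(Z)|+|N_G(Y_0)|$, and $Z\cap Y_0=\emptyset$, so $|N_{G\setminus N_G(Y_0)}(Z)|\ge|N_G(Z\cup Y_0)|-|N_G(Y_0)|\ge |Z|+|Y_0|+\sigma(G)-|N_G(Y_0)|=|Z|$; a slightly more careful bookkeeping pushes this to $\ge|Z|+1$ by using that $\sigma$ restricted away from a tight set strictly increases). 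Then induction finishes the job, and the two forests glue together into a single forest because they share no vertices. I expect this submodularity step controlling the surplus of the leftover part to be the part requiring the most care to state precisely; everything else is bookkeeping about forests and degrees.
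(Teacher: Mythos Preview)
The paper does not prove this theorem: it is quoted from Lov\'asz--Plummer \cite[Theorem~1.3.8]{LP} and used as a black box, so there is no in-paper proof to compare against. I can only assess your argument on its own merits. Your easy direction is clean and correct.

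Your hard direction has the right ingredients (Hall with slack, tight sets, submodularity of $Y\mapsto |N_G(Y)|$, induction) but contains two genuine gaps. First, ``near-perfect matching plus one extra edge'' does not build the required forest on $Y_0\cup N_G(Y_0)$: a matching saturating $Y_0$ gives every $Y_0$-vertex degree~$1$, and a single additional edge raises the degree of only \emph{one} vertex of $Y_0$ to~$2$. Getting every $Y_0$-vertex to degree~$2$ in a forest is exactly the statement you are trying to prove, now on the induced subgraph, so nothing has been reduced. Second, your surplus bookkeeping for the complement only yields $|N_{G'}(Z)|\ge |Z|$; to get $\ge|Z|+1$ you need $Z\cup Y_0$ not to be a minimiser of $\sigma$, which is guaranteed when $Y_0$ is the \emph{maximal} tight set, not a minimal one --- but with a maximal $Y_0$ you lose the ``proper subsets have surplus $\ge 2$'' rigidity you invoked for Step~2, and if $Y_0=A$ nothing is reduced at all.

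A cleaner route, close to the actual Lov\'asz--Plummer argument, avoids tight-set surgery entirely: pass to an edge-minimal spanning subgraph $H\subseteq G$ with $\sigma(H)\ge 1$ and show that $H$ itself is the desired forest. Each $a\in A$ has $\deg_H(a)\ge 2$ trivially; if some $a$ had three incident edges $ab_1,ab_2,ab_3$, edge-minimality produces tight sets $Y_i\ni a$ (with $b_i$'s only $Y_i$-neighbour equal to $a$), submodularity forces $Y=Y_1\cap Y_2\cap Y_3$ to be tight, and then $N_H(Y\setminus\{a\})\subseteq N_H(Y)\setminus\{b_1,b_2,b_3\}$ gives $\sigma_H(Y\setminus\{a\})\le -1$, a contradiction. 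Acyclicity then follows by an edge count on any component containing a putative cycle, using $\deg_H(a)=2$ for all $a\in A$.
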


We now apply this result to the setting of thin sets.
Let $\tau$ be a non-empty collection of non-empty subsets of $X$. We associate a 
bipartite graph $G(\tau)$ to $\tau$ that has the vertex set $\tau \cup L(\tau)$
and the edge set given by containment (i.e. $\{t,x\}$ is an edge in $G(\tau)$
if and only if $x \in t$ with $x\in L(\tau)$ and $t\in\tau$). Thus we are representing our set $\tau$ by 
a bipartite graph $G=(A\cup B,E)$ with $A=\tau$, $B=X$ and $E$
given  by containment. 

Since $\tau$ is thin if and only if $G(\tau)$ has positive surplus, by Theorem~\ref{forest}, the following corollary is straightforward.

\begin{corollary}
	Suppose that $\tau$  is a subset of $\binom{X}{2}$ with $L(\tau) = X$. Then 
	$\tau$ is thin if and only if $G(\tau)$ is a forest. 
\end{corollary}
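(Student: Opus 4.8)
The plan is to apply Theorem~\ref{forest} to the bipartite graph $G(\tau)$ and then exploit the special structure that comes from $\tau\subseteq\binom{X}{2}$, namely that every vertex in the part $A=\tau$ has degree exactly $2$ in $G(\tau)$. First I would record the observation (already flagged in the text) that $\tau$ is thin precisely when $G(\tau)$ has positive surplus as viewed from $A$: by definition thin means $|L(\tau')|\ge|\tau'|+(r-1)=|\tau'|+1$ for every nonempty $\tau'\subseteq\tau$, and since $N_{G(\tau)}(\tau') = L(\tau')$ for any $\tau'\subseteq A$, this says exactly $\sigma_{G(\tau)}(\tau')\ge 1>0$ for all nonempty $\tau'$, i.e. $\sigma(G(\tau))>0$. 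So thinness $\iff$ positive surplus, and by Theorem~\ref{forest} positive surplus $\iff$ $G(\tau)$ contains a spanning-type forest $F$ with $\deg_F(u)=2$ for all $u\in A$.

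The remaining work is to upgrade ``$G(\tau)$ \emph{contains} a forest $F$ in which every $A$-vertex has degree $2$'' to ``$G(\tau)$ \emph{is} a forest''. The key point is that in $G(\tau)$ itself every $t\in A=\tau$ already has degree exactly $2$ (since $t\in\binom{X}{2}$), so the only edges of $G(\tau)$ not used by $F$ would have to be incident to some $t\in A$ — but $t$ contributes both its edges to $F$ already, so $F$ must use every edge incident to every $A$-vertex, and since every edge of the bipartite graph $G(\tau)$ is incident to an $A$-vertex, $F = G(\tau)$. Hence $G(\tau)$ is a forest. Conversely, if $G(\tau)$ is a forest, then taking $F = G(\tau)$ shows $G(\tau)$ contains such a forest (each $t\in A$ has degree $2$), so by Theorem~\ref{forest} it has positive surplus and $\tau$ is thin. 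I should also dispose of the trivial degenerate possibility: a vertex $t\in\tau$ with $\deg_{G(\tau)}(t)=2$ is automatic because elements of $\binom{X}{2}$ are $2$-element sets with distinct elements, so both containments give genuine distinct edges.

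I do not expect any serious obstacle here; the proof is essentially a one-line reduction to Theorem~\ref{forest} plus the elementary edge-counting remark above. The only point requiring a little care is making explicit why the forest $F$ supplied by Theorem~\ref{forest} must coincide with $G(\tau)$ rather than merely be a subgraph — this hinges on the fact that \emph{every} edge of $G(\tau)$ touches the side $A$ and that $A$-vertices have degree exactly $2$ in both $F$ and $G(\tau)$, which forces equality. With that spelled out, the corollary follows immediately in both directions.
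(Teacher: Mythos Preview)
Your proposal is correct and follows exactly the approach the paper intends: the paper states only that ``since $\tau$ is thin if and only if $G(\tau)$ has positive surplus, by Theorem~\ref{forest}, the following corollary is straightforward,'' and you have supplied precisely the missing straightforward step, namely that the degree-$2$ forest $F$ guaranteed by Theorem~\ref{forest} must coincide with $G(\tau)$ because every $A$-vertex already has degree exactly $2$ in $G(\tau)$.
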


For $r=3$, it might also be  interesting to characterise those graphs $G(\tau)$ for which $\tau$ is thin.

\section{The general case (Slim set systems)}
\label{slimcase}

Suppose we have a non-empty collection $\tau$ of subsets of $X$, each of size at least 3. Consider the modified notion of excess, denoted $\x'$ and defined as follows:
Define $$\x'(\tau) =  |L(\tau)|-2 -  \sum_{s \in \tau} (|s|-2).$$

Notice that when  $\tau \subseteq \binom{X}{3}$ this notion of excess agrees with the earlier one. 

Given a non-empty collection $\tau$ of subsets of $X$, each of size at least 3, we say that $\tau$ is 
is {\em slim} if for every non-empty subset $\tau'$
of $\tau$, we have $\x'(\tau')\geq 0$. 
The next result relates slim to thin; the two notions coincide when $\tau \subseteq \binom{X}{3}$, however, slim is a more restrictive notion than thin when $\tau \subseteq \binom{X}{r}$,
for $r>3$. Note, however, that (unlike the thin property) the slim property does not require the  sets in $\tau$ to all have the same size.

\begin{lemma}
Suppose that $\tau \subseteq \binom{X}{r}$, $r \geq 3$. If $\tau$ is slim, then $\tau$ is thin.  Moreover, for $r=3$,
$\tau$ is thin if and only if $\tau$ is slim.
\end{lemma}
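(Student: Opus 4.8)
The plan is to establish the lemma by directly comparing the two excess functions. First I would observe that for any non-empty collection $\omega$ of subsets of $X$, each of size at least $3$, we have the identity
\[
\x'(\omega) = \x(\omega) + \sum_{s \in \omega} \bigl( (|s| - 2) - 1 \bigr) - \bigl( (r-1) - 1 \bigr) \quad \text{when all sets have size } r,
\]
but more cleanly I would just compare term by term: for $\omega \subseteq \binom{X}{r}$ we have $\x(\omega) = |L(\omega)| - |\omega| - (r-1)$ and $\x'(\omega) = |L(\omega)| - 2 - |\omega|(r-2)$, so that
\[
\x(\omega) - \x'(\omega) = -|\omega| - (r-1) + 2 + |\omega|(r-2) = |\omega|(r-3) - (r-3) = (|\omega| - 1)(r-3).
\]
Since $r \geq 3$ and $|\omega| \geq 1$, this difference is non-negative, hence $\x(\omega) \geq \x'(\omega)$ for every non-empty $\omega \subseteq \binom{X}{r}$.

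From this inequality the first assertion is immediate: if $\tau$ is slim, then for every non-empty $\tau' \subseteq \tau$ we have $\x(\tau') \geq \x'(\tau') \geq 0$, so $\tau$ is thin.

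For the case $r = 3$, the computation above gives $\x(\omega) - \x'(\omega) = (|\omega| - 1) \cdot 0 = 0$, so $\x = \x'$ on all non-empty subsets of $\binom{X}{3}$ (this is exactly the remark already made in the text that the two notions of excess agree when $\tau \subseteq \binom{X}{3}$). Consequently the defining conditions for thin and slim coincide verbatim, and $\tau$ is thin if and only if it is slim. I do not anticipate any real obstacle here; the only thing to be careful about is getting the arithmetic in the difference $\x - \x'$ right and noting explicitly that $|\omega| \geq 1$ so that the factor $(|\omega|-1)$ is non-negative rather than merely an integer. One could alternatively phrase the whole argument as a one-line consequence of the stated fact that $\x = \x'$ on $\binom{X}{3}$ together with the monotonicity observation that passing from size-$3$ sets to larger sets only decreases $\x'$ relative to $\x$, but writing out the explicit formula for $\x - \x'$ makes the proof self-contained.
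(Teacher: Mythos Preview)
Your proof is correct and is essentially the same argument as the paper's: both reduce the comparison to the inequality $|\tau'|(r-2)+2 \geq |\tau'|+(r-1)$, equivalently $(|\tau'|-1)(r-3)\geq 0$, which holds for $r\geq 3$ and $|\tau'|\geq 1$ and becomes an equality when $r=3$. Your presentation via the explicit formula $\x(\omega)-\x'(\omega)=(|\omega|-1)(r-3)$ is a slightly cleaner packaging of the same computation.
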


\begin{proof}
If $|s|=r$ for each $s \in \tau$, then $\sum_{s \in \tau'} (|s|-2) = |\tau'|(r-2)$; therefore,  $\tau$ is thin if and only if
$|L(\tau')| \geq |\tau'|(r-2) +2$ for every non-empty subset $\tau'$ of $\tau$. We now impose the assumption that $r\geq 3$. First, if $r>3$,  then the required inequality:  $|\tau'|(r-2) +2 \geq |\tau'|+(r-1)$ is equivalent to the condition that $|\tau'| \geq 1$, which holds
by the assumption that $\tau'$ is non-empty.  Thus if $\tau$ is slim, it is also thin.  Moreover, when $r=3$, the inequality
$|\tau'|(r-2) +2 \geq |\tau'|+(r-1)$ becomes an equality; in this case, $\tau$ is slim if and only if it is thin. 

\hfill$\Box$ \end{proof}

We can extend the notion of phylogenetic flexibility introduced in Section~\ref{intro}
to arbitrary collections of subsets of $X$ as follows.
We first need to extend the earlier definitions of `display'
and `compatibility' from
sets of rooted triples to arbitrary collections of rooted trees, as follows.

Given a rooted phylogenetic $X$-tree $T$, and a binary phylogenetic tree
$T'$ with leaf set $Y \subseteq X$,
$T$ is said to {\em display} $T'$ if $T$ contains a subdivision of $T'$ as a (directed)
subtree (this is equivalent to the condition that
each rooted triple  displayed by $T'$ is also displayed by $T$ \cite{bry}).
A set $R$ of rooted binary phylogenetic trees is said to be {\em compatible} if there is
rooted phylogenetic tree $T$ that displays each of the trees in $R$.

For a set $R$ of rooted binary phylogenetic tree, let $||R||$ denote the collection of
their leaf sets. Thus $||R||$ is a set of sets. Given a non-empty collection
$\tau$ of  subsets of $X$, each of size at least 3, we say that $\tau$ is {\em
phylogenetically flexible} if every set $R$ of rooted binary phylogenetic trees for
which $||R|| = \tau$ holds is compatible.

This notion agrees with the earlier notion of phylogenetic flexibility in the case where each set in $\tau$ has size exactly 3.
Moreover, as before, we can assume without loss of generality that the tree $T$ (in the definition) is binary.

The following result is a strengthening of our earlier Theorem~\ref{main}; one direction follows from that theorem, the other direction is a consequence of a result from \cite{gru12} (which dealt with unrooted trees).

\begin{thm}
\label{propo}
Suppose that $\tau$ is a  collection of sets, each of size at least 3.  Then $\tau$ is phylogenetically flexible if and only if $\tau$ is slim.
\end{thm}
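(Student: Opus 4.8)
The plan is to prove the two directions separately, leaning on Theorem~\ref{main} for the case $r=3$ and on the structural result from \cite{gru12} for the general case.

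For the ``only if'' direction (phylogenetically flexible $\Rightarrow$ slim), I would argue by contrapositive. Suppose $\tau$ is not slim; then there is a non-empty subset $\tau'$ with $\x'(\tau')<0$. The idea is to replace each set $s \in \tau'$ of size $|s| = m \geq 3$ by a rooted binary caterpillar tree on $s$ whose set of cherries and internal structure is chosen to force incompatibility. More concretely, I would show that $\x'(\tau') < 0$ lets one build, for each $s\in\tau'$, a set of $|s|-2$ rooted triples on $s$ (the triples displayed by a binary tree on $s$), such that the union over $\tau'$ of all these triples has too few leaves relative to its ``cost'': the bipartite/graph counting argument used in the proof of Theorem~\ref{main} (counting vertices minus edges of the graph $[R,S]$) generalises, since a binary tree on $m$ leaves contributes $m-2$ ``useful'' triples/edges. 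Then the union graph $[R,S]$ on $S = L(\tau')$ is connected, so $R$ is incompatible, and since $||R|| = \tau'$ extends (by Lemma~\ref{obs1}-type heredity in reverse, adding arbitrary trees on the remaining sets of $\tau\setminus\tau'$) to a choice of trees $R$ with $||R||=\tau$ that is incompatible, $\tau$ is not phylogenetically flexible. I would need to check the hereditary claim: if some subcollection forces incompatibility, so does the whole collection — this is immediate since adding more constraints cannot restore compatibility.

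For the ``if'' direction (slim $\Rightarrow$ phylogenetically flexible), I would use the result of \cite{gru12} on unrooted trees together with the rooting trick already used in the proof of Theorem~\ref{main}. Given a set $R$ of rooted binary phylogenetic trees with $||R|| = \tau$, first forget the roots to get unrooted binary trees $R^u$ with leaf sets $\tau$. Since $\tau$ is slim, the Gr\"unewald condition is met (the modified excess $\x'$ is exactly the slack in his ``slim'' inequality, counting interior edges of a binary tree on $m$ leaves as $m-3$ and leaves as the rest), so Theorem~1.1 of \cite{gru12} gives an unrooted $X$--tree $T^u$ displaying every tree in $R^u$. Then I would root $T^u$ carefully so that the resulting rooted tree displays every rooted tree in $R$: since all the trees in $R$ share a common parent structure, one can locate an edge or vertex of $T^u$ consistent with all the rootings — here the argument of Theorem~\ref{main} (where a root was inserted to separate $x,y$ from $z$) generalises, but the bookkeeping is heavier because the input trees overlap. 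Alternatively, and more cleanly, I would reduce to the triple case: a rooted binary tree on $m$ leaves is compatible data equivalent to its set of $\binom{m}{3}$ (or just $m-2$ generating) rooted triples, so $R$ is compatible iff the induced set of rooted triples is; and the collection of triple-leaf-sets arising this way, when $\tau$ is slim, is thin by the Lemma relating slim and thin — then Theorem~\ref{main} finishes it.

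The main obstacle I anticipate is the rooting step in the ``if'' direction: Theorem~1.1 of \cite{gru12} produces an \emph{unrooted} display tree, and turning this into a \emph{rooted} display tree for the rooted inputs requires showing the rootings of the individual input trees are mutually consistent with a single placement of the root in $T^u$. The reduction-to-triples route sidesteps this by working entirely within the already-established rooted framework of Theorem~\ref{main}, at the cost of verifying that the slim condition on $\tau$ implies the thin condition on the associated collection of triple leaf-sets (which is exactly the content of the preceding Lemma, applied subcollection by subcollection). I would therefore favour the reduction-to-triples approach for the ``if'' direction and keep the explicit graph-counting construction for the ``only if'' direction.
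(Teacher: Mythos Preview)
Both directions of your proposal have genuine gaps.

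\textbf{The ``only if'' direction.} Your contrapositive argument asserts that when $\x'(\tau')<0$, suitably chosen caterpillar trees on the sets of $\tau'$ will make the graph $[R,L(\tau')]$ connected, ``by the vertex-minus-edge counting argument of Theorem~\ref{main}''. But that counting argument only runs one way: \emph{few} edges (at most $|S|-2$) forces disconnectedness; \emph{many} edges does not force connectedness. You give no construction showing how to choose the caterpillars so that the graph is actually connected, and no such construction is apparent. The paper avoids this entirely by arguing directly rather than by contrapositive: it replaces each $s\in\tau$ by a fixed set $A(s)$ of $|s|-2$ triples sharing a common pair, proves (using flexibility of $\tau$) that the $A(s)$ are pairwise disjoint, shows that $\alpha(\tau)=\bigcup_s A(s)$ is itself phylogenetically flexible, applies the ``only if'' direction of Theorem~\ref{main} to conclude $\alpha(\tau)$ is thin, and reads off slimness of $\tau$ from $|\alpha(\tau')|=\sum_{s\in\tau'}(|s|-2)$. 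Both directions of Theorem~\ref{main} get used in this argument.

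\textbf{The ``if'' direction.} Your approach~1 (forget roots, apply Gr\"unewald, then re-root) cannot work as stated: distinct rooted trees can have the same underlying unrooted tree (e.g.\ $ab|c$ and $bc|a$), so unrooted compatibility of the $R^u$ does not imply rooted compatibility of $R$, and there is in general no single root placement in $T^u$ consistent with all the input rootings. The paper's fix is the standard outgroup trick you are missing: adjoin a \emph{new} element $x$ to every set in $\tau$, attach $x$ as a leaf at the root of each input tree, and observe that the slim inequality for $\tau$ becomes exactly Gr\"unewald's inequality for $\tau_{+x}$; the unrooted parent tree $T_x$ then yields a rooted parent tree by deleting $x$ and rooting at its neighbour. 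Your approach~2 (reduce to triples and invoke Theorem~\ref{main}) also fails: the collection of triple leaf-sets $\bigcup_{s\in\tau}\binom{s}{3}$ is \emph{not} thin whenever some $|s|\geq 4$ (already $\binom{s}{3}$ itself has excess $|s|-\binom{|s|}{3}-2<0$), and if you instead pick only $|s|-2$ generating triples per tree, their leaf sets depend on the particular $T_s$ and need not form a thin collection either. The Lemma you cite (slim $\Rightarrow$ thin for $\tau\subseteq\binom{X}{r}$) concerns $\tau$ itself, not any derived collection of triples, so it does not deliver what you need.
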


\begin{proof}
We first establish the `only if' direction.
Suppose that $\tau$ is phylogenetically flexible.  For each set $s \in \tau$, select two elements $x,y \in s$ and let:
$$A(s) = \{\{x,y,z\}: z \in s, z\neq x,y\}\},$$
and  for any non-empty subset $\tau'$ of $\tau$ let 
$$
\alpha(\tau')= \bigcup_{s \in \tau'} A(s).
$$
Thus $A(s)$ is a set of $|s|-2$ triples, and $\alpha(\tau)$ is also a set of triples.

\noindent {\bf Claim 1}:  $A(s) \cap A(s') = \emptyset$ for each $s,s'\in \tau,  s \neq s'$.

To see this, suppose that a triple, say $\{a,b,c\}$, lies in $A(s)$ and $A(s')$ for two distinct elements
$s$ and $s'$ of $\tau$.  Then we can select a rooted binary phylogenetic tree $T_s$ with leaf set $s$ that displays the rooted triple $ab|c$, and select a rooted binary phylogenetic tree $T_{s'}$  with leaf set $s'$ that displays the rooted triple $ac|b$.  But no rooted binary phylogenetic tree can display both $T_s$ and $T_{s'}$ (since such a tree would also simultaneously display two different rooted triples with leaf set $\{a,b,c\}$). This contradicts the assumption that $\tau$ is phylogenetically flexible, so such a shared triple $\{a,b,c\}$ in $A(s) \cap A(s')$ cannot exist.
This establishes Claim 1.

\noindent {\bf Claim 2}:  $\alpha(\tau)$ is phylogenetically flexible.

To see this, suppose that for each triple $t \in \alpha(\tau)$, we have an associated rooted triple $T_t$  with leaf set $t$.
We need to show that there is a rooted binary phylogenetic tree that displays $\{T_t: t \in \alpha(\tau)\}$.
Observe that $A(s)$ is thin for each $s\in \tau$, since if $A$ is a non-empty subset of $A(s)$ of size $k$ (say), then 
$|\bigcup A| = k+2$, and so $|\bigcup A| = |A|+2$. 
 Theorem~\ref{main} (the `if' direction) then ensures that for each $s$ in $\tau$, there is a rooted phylogenetic tree
$T_s$ with leaf set $s$ that displays $\{T_t: t \in \alpha(\tau) \cap A(s)\}$.  Moreover,  since $\tau$ is phylogenetically flexible, there is a rooted binary phylogenetic  tree $T$  that displays $T_s$ for each $s\in \tau$.  It follows that the tree $T$ displays $\{T_t: t \in \alpha(\tau)\}$, and so $\alpha(\tau)$ is phylogenetically flexible, as claimed.

Claim 2 implies that $\alpha(\tau)$ is thin, by Theorem~\ref{main} (the `only if' direction). We now show that this implies that
$\tau$ is slim.    Let $\tau'$ be a non-empty subset of $\tau$ and consider $\alpha(\tau')$.  
Since  $L(\tau')=L(\alpha(\tau'))$, we have:
\begin{equation}
\label{cuptau}
|L(\tau')| = |L( \alpha(\tau'))| \geq |\alpha(\tau')| +2,
\end{equation}
where the inequality holds  because $\alpha(\tau)$ (and thereby its subset
$\alpha(\tau')$) is thin.

Now:
 $$|\alpha(\tau')| = \sum_{s \in \tau'} |A(s)| = \sum_{s \in \tau'} (|s|-2),$$
 where the first equality holds by Claim 1.
Combining this last equation with  Inequality~(\ref{cuptau}) gives:
$$|L(\tau')|-2 \geq  \sum_{s \in \tau'}( |s|-2),$$ which shows that $\tau$ is slim as claimed.

This establishes the `only if' direction.   Notice in doing so that we have used {\em both} directions of Theorem~\ref{main} in different places in this proof.

We turn now to the `if'  direction.
Given  $\tau$, select  a new element, say $x$, that is not present in any of the sets in $\tau$, and add this to each of the sets in $\tau$ to produce a set $\tau_{+x}$.  Notice that if $\tau$ is slim, then $\tau_{+x}$ satisfies the property that for each non-empty set $\tau'$ of $\tau_{+x}$, we have:
$$|L(\tau')| -3 \geq \sum_{s\in \tau'}(|s|-3).$$
It follows from Theorem 1.1 of \cite{gru12} that for any assignment of unrooted binary phylogenetic trees with leaf sets that correspond to the sets in $ \tau_{+x}$, there is a binary phylogenetic tree $T_x$ that displays each of these unrooted trees. 
Suppose now that we have an assignment of rooted binary phylogenetic trees having leaf sets that correspond to the sets in $\tau$.
By attaching $x$ as a leaf adjacent to the root of each of these trees, we obtain an assignment of unrooted binary phylogenetic trees with leaf sets that correspond to the sets in $\tau_{+x}$.  
Hence, by the result just stated, there is an unrooted binary phylogenetic tree $T_x$ that displays each of these unrooted trees. If we now let $T$ be the rooted binary phylogenetic tree obtained  from $T_x$ by deleting the leaf $x$ and rooting the resulting tree on the vertex adjacent to $x$, then $T$ displays the original assignment of rooted binary phylogenetic trees.  Since this holds for all possible assignments of rooted phylogenetic trees to the sets in $\tau$, it follows that $\tau$ is phylogenetically flexible. 
\hfill$\Box$ \end{proof}

\section{Polynomial--time algorithms for thin and slim}

Given  finite set $S$, a function $f: 2^S \to \R$ is called {\em submodular} if for all $A,B \subseteq S$:
$$
f(A)+f(B) \ge f(A \cup B) + f(A \cap B).
$$
Submodular functions play an important role in optimization and matroid theory (see
e.g. \cite{LP,B,W}). In this section, we exploit these connections to show that 
there are polynomial-time algorithms to decide if sets are thin or slim.

Suppose that $\tau$ is a subset of $2^X$. 
For $\tau' \subseteq \tau$ we define 
$$
\sigma(\tau') = |L(\tau')| - |\tau'|,
$$
and
$$
\gamma(\tau') = |L(\tau')| - \sum_{s \in \tau'} (|s|-2)
$$
Note that $\sigma(\emptyset)=\gamma(\emptyset)=0$
(since the summation term is then empty).
Although the following result is straight-forward to show by using results
concerning submodular functions in the literature, for completeness
we give a direct proof.

\begin{thm}\label{submod}
	For $\tau$ a non-empty  subset of $2^X$, 
the functions $\sigma:2^{\tau} \to \R; \tau' \mapsto \sigma(\tau')$ 
	and $\gamma:2^{\tau}\to \R; \tau' \mapsto \gamma(\tau')$ 
are submodular. 
\end{thm}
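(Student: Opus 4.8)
The plan is to prove submodularity of each function directly from the definitions, reducing everything to two elementary facts: first, that $L$ is a ``union-type'' set operation, so that $|L(\tau' \cup \tau'')| + |L(\tau' \cap \tau'')| \le |L(\tau')| + |L(\tau'')|$; and second, that the ``penalty'' terms $|\tau'|$ and $\sum_{s \in \tau'}(|s|-2)$ are \emph{modular}, i.e. they are additive over disjoint unions and hence satisfy the submodularity inequality with equality. Combining a submodular term with a modular term (subtracted) preserves submodularity, which gives the result.

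In more detail, first I would handle $\sigma$. Fix $\tau', \tau'' \subseteq \tau$. For the set-union term, note that $L(\tau' \cup \tau'') = L(\tau') \cup L(\tau'')$ and $L(\tau' \cap \tau'') \subseteq L(\tau') \cap L(\tau'')$; applying the standard inclusion-exclusion bound $|P \cup Q| + |P \cap Q| = |P| + |Q|$ with $P = L(\tau')$, $Q = L(\tau'')$ and monotonicity of cardinality gives
\begin{equation}
|L(\tau' \cup \tau'')| + |L(\tau' \cap \tau'')| \le |L(\tau')| + |L(\tau'')|.
\end{equation}
For the cardinality term, $|\tau' \cup \tau''| + |\tau' \cap \tau''| = |\tau'| + |\tau''|$ exactly. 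Subtracting this equality from the previous inequality yields $\sigma(\tau' \cup \tau'') + \sigma(\tau' \cap \tau'') \le \sigma(\tau') + \sigma(\tau'')$, as required. The function $\gamma$ is handled identically, the only change being that the subtracted term is now $g(\tau') = \sum_{s \in \tau'}(|s|-2)$; since $g$ assigns a fixed weight $w_s = |s|-2$ to each element $s$ of $\tau$, it is additive over disjoint unions, so $g(\tau' \cup \tau'') + g(\tau' \cap \tau'') = g(\tau') + g(\tau'')$ exactly, and the same subtraction argument applies.

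There is essentially no hard step here — the content is entirely in the observation that $L$ interacts correctly with $\cup$ and $\cap$, and that the remaining terms are modular. The one point requiring a moment's care is the inequality $|L(\tau' \cap \tau'')| \le |L(\tau') \cap L(\tau'')|$, which is \emph{not} an equality in general (an element of $X$ can appear in a set from $\tau'$ and in a different set from $\tau''$ without appearing in any set common to both); fortunately only the $\le$ direction is needed, and it follows immediately from $\tau' \cap \tau'' \subseteq \tau'$ and $\tau' \cap \tau'' \subseteq \tau''$ together with monotonicity of $L$ and of cardinality. I would also briefly remark, for completeness, that $\sigma(\emptyset) = \gamma(\emptyset) = 0$ so the inequalities hold trivially when either argument is empty, matching the note in the statement.
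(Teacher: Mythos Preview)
Your proposal is correct and follows essentially the same argument as the paper: both establish $|L(\tau_1 \cup \tau_2)| + |L(\tau_1 \cap \tau_2)| \le |L(\tau_1)| + |L(\tau_2)|$ via $L(\tau_1 \cup \tau_2) = L(\tau_1) \cup L(\tau_2)$ and $L(\tau_1 \cap \tau_2) \subseteq L(\tau_1) \cap L(\tau_2)$, then combine with the modularity of the subtracted term ($|\tau'|$ for $\sigma$, $\sum_{s\in\tau'}(|s|-2)$ for $\gamma$). Your explicit remark that the containment $L(\tau' \cap \tau'') \subseteq L(\tau') \cap L(\tau'')$ can be strict is a nice clarification, but the overall route is the same.
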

\begin{proof}
Suppose that $\tau$ is a non-empty subset of $2^X$, and that 
$\tau_1, \tau_2 \subseteq \tau$ are non-empty.
Clearly, $|L(\tau_1 \cup \tau_2)| = |L(\tau_1)  \cup L(\tau_2)|$ and
$|L(\tau_1 \cap \tau_2)| \le  |L(\tau_1) \cap L(\tau_2)|$. Hence:
\begin{eqnarray*}
|L(\tau_1 \cup \tau_2)| + |L(\tau_1 \cap \tau_2)| & \le & |L(\tau_1)  \cup L(\tau_2)| + |L(\tau_1) \cap L(\tau_2)|\\
& = & (|L(\tau_1)| + |L(\tau_2)|  -  |L(\tau_1) \cap L(\tau_2)| )+ |L(\tau_1) \cap L(\tau_2)|\\
& = & |L(\tau_1)| + |L(\tau_2)|.
\end{eqnarray*}
The fact that $\sigma$ is submodular now follows, since
$|\tau_1| + |\tau_2|  = |\tau_1 \cup \tau_2| + |\tau_1 \cap \tau_2|$ and thus, by the above inequality, we have:
$$\sigma(\tau_1) + \sigma(\tau_2) =
|L(\tau_1)| + |L(\tau_2)| -(|\tau_1| + |\tau_2|) \ge $$
$$|L(\tau_1 \cup \tau_2)| + |L(\tau_1 \cap \tau_2)| - |\tau_1 \cup \tau_2| - |\tau_1 \cap \tau_2| 
= \sigma(\tau_1 \cup \tau_2) + \sigma(\tau_1 \cap \tau_2).
$$
Similarly, $\gamma$  is submodular, since 
$$\sum_{s \in \tau_1} (|s|-2) + \sum_{s \in \tau_2} (|s|-2) =  
\sum_{s \in \tau_1\cup \tau_2} (|s|-2) + \sum_{s \in \tau_1 \cap \tau_2} (|s|-2)$$ and therefore:
\begin{eqnarray*}
\gamma(\tau_1) + \gamma(\tau_2) &= &
|L(\tau_1)| + |L(\tau_2)| -(\sum_{s \in \tau_1} (|s|-2) + \sum_{s \in \tau_2} (|s|-2) )\\
& \ge &
|L(\tau_1 \cup \tau_2)| + |L(\tau_1 \cap \tau_2)| - \sum_{s \in \tau_1\cup \tau_2} (|s|-2) - \sum_{s \in \tau_1 \cap \tau_2} (|s|-2)\\
&=& \gamma(\tau_1 \cup \tau_2) + \gamma(\tau_1 \cap \tau_2).
\end{eqnarray*}
\hfill$\Box$ \end{proof}

For $\tau \subseteq 2^X$, we define: 
$$\sigma^*(\tau) = \min\{\sigma(\tau') \,:\, \tau' \subseteq \tau, \tau' \neq \emptyset\},
\mbox{ and } \gamma^*(\tau) = \min\{\gamma(\tau') \,:\, \tau' \subseteq \tau, \tau' \neq \emptyset\}.$$

\begin{lemma}\label{obs}
\mbox{}
\begin{itemize}
	\item[(i)] 
	Suppose that $\tau \subseteq {X \choose r}$ where $r\geq 3$.  
Then $\tau$ is thin if and only $\sigma^*(\tau) \ge 2$.
	\item [(ii)] Suppose $\tau \subseteq 2^X$ such that each element in $\tau$ has size at least three. Then $\tau$ is slim
		  if and only $\gamma^*(\tau) \ge 2$.
	\end{itemize}	
\end{lemma}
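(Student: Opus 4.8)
\textbf{Proof proposal for Lemma~\ref{obs}.}

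The plan is to prove both parts by directly unwinding the definitions of thin and slim against the definitions of $\sigma^*$ and $\gamma^*$, using the functions $\sigma$ and $\gamma$ introduced just before the lemma. For part (i), recall that $\tau \subseteq \binom{X}{r}$ is thin precisely when $\x(\tau') \geq 0$ for every non-empty $\tau' \subseteq \tau$, and that $\x(\tau') = |L(\tau')| - |\tau'| - (r-1) = \sigma(\tau') - (r-1)$. Hence $\x(\tau') \geq 0$ holds for all non-empty $\tau' \subseteq \tau$ if and only if $\sigma(\tau') \geq r-1$ for all such $\tau'$, i.e.\ if and only if $\sigma^*(\tau) \geq r-1$. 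Wait --- this gives $r-1$, not $2$, so the statement as written only matches when $r=3$; the cleanest route is to note that for a \emph{single} set $s \in \tau$ we have $\sigma(\{s\}) = r - 1$, so $\sigma^*(\tau) \le r-1$ always, and for $|\tau'| \ge 2$ thinness forces $\sigma(\tau') \ge r$; so in fact $\sigma^*(\tau) \in \{2, \dots, r-1\}$ is not the right reduction. I would instead simply assert the equivalence $\tau \text{ thin} \iff \sigma^*(\tau) \ge r-1$, and (if the intended statement genuinely has the constant $2$) observe that the paper is implicitly restricting attention to $r = 3$ in Lemma~\ref{obs}(i), where $r-1 = 2$; I would phrase the proof to make the $r=3$ specialization explicit so the displayed bound $\sigma^*(\tau) \ge 2$ is literally correct. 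Either way the argument is a one-line rewrite: thin $\iff$ every non-empty $\tau'$ has $\sigma(\tau') \ge r-1$ $\iff$ the minimum $\sigma^*(\tau) \ge r-1$.

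For part (ii), the argument is completely parallel. By definition $\tau$ (with every set of size at least $3$) is slim precisely when $\x'(\tau') \geq 0$ for every non-empty $\tau' \subseteq \tau$, where $\x'(\tau') = |L(\tau')| - 2 - \sum_{s\in\tau'}(|s|-2) = \gamma(\tau') - 2$. Therefore $\x'(\tau') \geq 0$ for all non-empty $\tau'$ if and only if $\gamma(\tau') \geq 2$ for all non-empty $\tau'$, which is exactly the statement $\gamma^*(\tau) \geq 2$. I would spell this out in two short sentences, noting that the minimum in the definition of $\gamma^*$ ranges over exactly the non-empty subsets appearing in the definition of slim, so the universally quantified inequality and the inequality on the minimum are literally the same condition.

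There is essentially no obstacle here: the lemma is a bookkeeping observation whose entire content is that "the worst subset" formulation ($\gamma^*(\tau) \ge 2$, resp.\ $\sigma^*(\tau) \ge r-1$) is equivalent to the "all subsets" formulation, and the point of isolating it is to set up the application of submodular-minimization algorithms in the next part of the paper. The only thing requiring any care is the constant in part (i): I would double-check whether the paper intends $r=3$ there (in which case $2$ is correct) or general $r\geq 3$ (in which case the bound should be $r-1$), and write the proof so that the displayed inequality matches whichever convention is in force, flagging the $\sigma(\{s\}) = r-1$ computation as the reason a single set already certifies $\sigma^*(\tau) \le r-1$.
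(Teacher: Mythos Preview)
Your approach---unwind the definitions of thin/slim and rewrite them as bounds on $\sigma^*$ and $\gamma^*$---is exactly what the paper does; its proof of each part is a single line of the form ``thin $\iff$ $\sigma(\tau')\ge 2$ for all non-empty $\tau'\subseteq\tau$ $\iff$ $\sigma^*(\tau)\ge 2$'' (and similarly for $\gamma$).

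You are also right to flag the constant in part~(i). With the paper's definition $\x(\tau')=|L(\tau')|-|\tau'|-(r-1)$, thinness is literally equivalent to $\sigma^*(\tau)\ge r-1$, not $\sigma^*(\tau)\ge 2$, and for $r>3$ the two conditions differ: e.g.\ for $r=4$ the family $\tau=\{\{1,2,3,4\},\{1,2,3,5\},\{1,2,4,5\}\}$ has $\sigma^*(\tau)=2$ but $\x(\tau)=-1$, so it is not thin. The paper's one-line proof simply writes $2$ where it should write $r-1$, so this is a slip in the paper rather than in your reasoning; your suggestion to either restrict to $r=3$ or state the bound as $r-1$ is the correct fix. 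Part~(ii) is fine as stated, and your argument there matches the paper verbatim.
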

\begin{proof}
(i) $\tau$ is thin if and only if $\sigma(\tau') \ge 2$ for all non-empty $\tau' \subseteq \tau$ 
if and only if $\sigma^*(\tau) \ge 2$.\\

(ii)  $\tau$ is slim if and only if $\gamma(\tau') \ge 2$ for all non-empty $\tau' \subseteq \tau$ 
if and only if $\gamma^*(\tau) \ge 2$.
\hfill$\Box$ \end{proof}

The following result  \cite[Theorem 4.4]{lov} is originally due to Gr\"otschel,  Lov{\'a}sz
and Schrijver \cite{GLS} (see also \cite[pp. 417--418]{LP}).

\begin{thm}\label{gls}
	Let $f$ be a submodular function defined on the subsets of 
some finite set $S$.
	A set minimizing $f$ over all non-empty subsets of $S$ can then be found in polynomial time.
\end{thm}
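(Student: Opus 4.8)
The plan is to reduce the stated problem --- minimizing a submodular $f$ over the \emph{non-empty} subsets of $S$ --- to the classical fact that an \emph{unconstrained} submodular function (minimized over \emph{all} subsets, including $\emptyset$) can be minimized in polynomial time; this is the theorem of Gr\"otschel, Lov\'asz and Schrijver \cite{GLS}, obtained by applying the ellipsoid method to the Lov\'asz extension of $f$ (see also \cite{LP}). Since $\emptyset$ is always a candidate in the unconstrained problem, the only extra work is to force the output to be non-empty, and this is handled by solving $|S|$ suitably modified unconstrained instances, one for each choice of a ground element that is required to lie in the solution.

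Concretely, I would first record the standard element-fixing reduction. Fix $e \in S$ and define $g_e \colon 2^{S \setminus \{e\}} \to \R$ by $g_e(B) = f(B \cup \{e\})$. A one-line check from the defining inequality for submodularity, together with the identity $(B_1 \cup \{e\}) \cap (B_2 \cup \{e\}) = (B_1 \cap B_2) \cup \{e\}$, shows that $g_e$ is submodular on $2^{S \setminus \{e\}}$. Applying \cite{GLS} to $g_e$ produces, in time polynomial in $|S|$ and the bit-size of the values of $f$, a set $B_e^\ast \subseteq S \setminus \{e\}$ minimizing $g_e$; equivalently, $A_e^\ast := B_e^\ast \cup \{e\}$ minimizes $f$ over all subsets of $S$ that contain $e$.

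Next I would combine these. Run this subroutine once for each $e \in S$ and output the set $A_e^\ast$ for which $f(A_e^\ast)$ is smallest (breaking ties arbitrarily). Every non-empty $A \subseteq S$ contains at least one element $e$, and for that $e$ we have $f(A) \ge f(A_e^\ast)$; hence the set returned is a global minimizer of $f$ over the non-empty subsets of $S$. The procedure makes $|S|$ calls to the unconstrained algorithm plus $O(|S|)$ arithmetic comparisons, so it runs in polynomial time.

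The substantive ingredient is, of course, the polynomiality of unconstrained submodular minimization itself, which I would invoke as a black box rather than reprove; the reduction above is elementary, and its only genuine content is the verification that $g_e$ remains submodular. For the applications in this paper --- $f = \sigma$ or $f = \gamma$ on $2^{\tau}$ --- the function is integer-valued with $|f(\tau')| \le |X| + |\tau|$, so the oracle-access and bit-complexity hypotheses required by \cite{GLS} are met trivially, and Lemma~\ref{obs} then converts the computed value $\sigma^\ast(\tau)$ (respectively $\gamma^\ast(\tau)$) into a polynomial-time test for thinness (respectively slimness).
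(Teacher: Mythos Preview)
The paper does not prove this theorem at all: it is stated as a quotation from the literature, attributed to Gr\"otschel, Lov\'asz and Schrijver \cite{GLS} via Lov\'asz \cite[Theorem~4.4]{lov}, with no accompanying argument. So there is no ``paper's own proof'' to compare against.

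That said, your reduction is correct. The function $g_e(B)=f(B\cup\{e\})$ is the standard contraction of $f$ at $e$, and your verification that it is submodular is valid; applying unconstrained submodular minimization to each $g_e$ and taking the best of the $|S|$ answers indeed returns a minimizer of $f$ over non-empty subsets. Two minor remarks. First, strictly speaking you are not proving the theorem from scratch but reducing the ``non-empty'' formulation to the (equally black-box) unconstrained formulation; this is fine, and arguably the only honest thing one can do short of reproducing the ellipsoid argument. Second, your final paragraph about bit-complexity bounds for $\sigma$ and $\gamma$ is not part of Theorem~\ref{gls} itself but belongs to the subsequent discussion in the paper; it is correct, just slightly misplaced if the goal is a self-contained proof of the stated theorem.
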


In light of this theorem and Theorem~\ref{submod}, it follows that we can determine
$\sigma^*(\tau) $ and $\gamma^*(\tau)$  for a given set $\tau \subseteq 2^X$ 
in polynomial time. Therefore, by 
Lemma~\ref{obs}, we can determine whether or not a given set $\tau$ for which each
element has size at least three is thin or slim in polynomial time.

Note that although this shows that polynomial time algorithms exist for determining whether or not a set is thin or slim, these are likely to be impracticable
\cite[pp. 417--418]{LP}. However,  
for the case of determining whether or not a set $\tau$ is thin a more explicit
algorithm can be given. More specifically, in 
\cite[Theorem 2]{F13} a polynomial-time algorithm is presented for 
computing the surplus $\sigma(G)$ of a bipartite graph 
$G$. Since for a set $\tau \subseteq {X \choose r}$, $r\geq 3$, 
the surplus of the 
bipartite graph $G(\tau)$ as defined in Section~\ref{r2}
is equal to $\sigma^*(\tau)$, we can therefore 
apply this algorithm to determine if $\tau$ is thin. It would
be interesting to find an explicit algorithm for determining if a set is slim.

Theorem~\ref{submod} has another consequence that relates to phylogenetics. 
Recall that a {\em patchwork} is a non-empty collection ${\mathcal P}$ of sets that satisfies the property:  if $A, B \in {\mathcal P}$ and $A \cap B \neq \emptyset$  then $A \cap B, A \cup B \in {\mathcal P}$.  A combinatorial theory of patchworks, relevant to phylogenetics, was developed in  \cite{boc}.  Patchworks were also referred to as `intersecting families' in earlier work by Lov{\'a}sz in \cite[p. 240]{lov}.  	The following is a generalisation of \cite[Lemma 1.2]{dre09}, and  the proof follows a similar argument to that result.

\begin{corollary}

If $\tau$ is slim, then the collection $\mathcal P$ of non-empty subsets 
$\tau'$ of $\tau$ such that $|s|\geq 3$ for all $s\in \tau'$ 
and $\x'(\tau')=0$ forms a patchwork.
	\end{corollary}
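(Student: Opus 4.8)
The plan is to show that $\mathcal{P}$ is closed under intersection and union of overlapping members, exploiting the submodularity of $\gamma$ established in Theorem~\ref{submod} together with the fact that $\gamma^*(\tau) \geq 2$ (Lemma~\ref{obs}(ii)), since $\tau$ is slim. The key observation is that $\x'(\tau') = \gamma(\tau') - 2$ for any non-empty $\tau'$ whose members all have size at least $3$, so the condition $\x'(\tau') = 0$ is equivalent to $\gamma(\tau') = 2$, i.e. $\tau'$ attains the global minimum value $\gamma^*(\tau) = 2$.

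First I would take $\tau_1, \tau_2 \in \mathcal{P}$ with $\tau_1 \cap \tau_2 \neq \emptyset$. Both are non-empty subsets of $\tau$ with all members of size $\geq 3$, and $\gamma(\tau_1) = \gamma(\tau_2) = 2$. Since $\tau_1 \cap \tau_2$ is non-empty, it is itself a non-empty subset of $\tau$ all of whose members have size $\geq 3$, and likewise for $\tau_1 \cup \tau_2$; hence both $\gamma(\tau_1 \cap \tau_2) \geq 2$ and $\gamma(\tau_1 \cup \tau_2) \geq 2$ by Lemma~\ref{obs}(ii). On the other hand, submodularity of $\gamma$ gives
$$\gamma(\tau_1 \cup \tau_2) + \gamma(\tau_1 \cap \tau_2) \leq \gamma(\tau_1) + \gamma(\tau_2) = 4.$$
Combining this with the two lower bounds of $2$ forces $\gamma(\tau_1 \cup \tau_2) = \gamma(\tau_1 \cap \tau_2) = 2$, i.e. $\x'(\tau_1 \cup \tau_2) = \x'(\tau_1 \cap \tau_2) = 0$. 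Both $\tau_1 \cup \tau_2$ and $\tau_1 \cap \tau_2$ therefore lie in $\mathcal{P}$, which is exactly the patchwork property. I would also note at the outset that $\mathcal{P}$ is non-empty: since $\tau$ itself need not satisfy $\x'(\tau) = 0$, one should instead observe that any single $s \in \tau$ (which has size $\geq 3$) has $\gamma(\{s\}) = |s| - (|s| - 2) = 2$, so singletons belong to $\mathcal{P}$, ensuring $\mathcal{P} \neq \emptyset$ as required by the definition of patchwork.

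There is no real obstacle here — the argument is essentially the standard "submodular function sandwiched between its minimum" trick, and the only point requiring a little care is verifying that $\tau_1 \cap \tau_2$ and $\tau_1 \cup \tau_2$ genuinely qualify as sets to which Lemma~\ref{obs}(ii) applies (non-empty, and all members of size $\geq 3$), both of which are immediate since they are subsets of $\tau_1 \cup \tau_2 \subseteq \tau$ and the size condition is inherited. The main conceptual step is simply recognising the translation $\x'(\tau') = \gamma(\tau') - 2$, after which Theorem~\ref{submod} and Lemma~\ref{obs}(ii) do all the work.
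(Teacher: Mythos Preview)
Your proof is correct and follows essentially the same approach as the paper: both use the translation $\x'(\tau') = \gamma(\tau') - 2$, apply submodularity of $\gamma$ (Theorem~\ref{submod}) to $\tau_1,\tau_2$, and then use the slim hypothesis to force equality. Your additional remark that singletons witness $\mathcal{P}\neq\emptyset$ is a small bonus the paper omits, but otherwise the arguments coincide.
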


	\begin{proof}
Suppose $\tau_1, \tau_2 \in \mathcal P$ satisfy $\tau_1 \cap \tau_2 \neq \emptyset$. For $i=1,2$, notice that $\x'(\tau_i) = \gamma(\tau_i) -2$ and so, 
by the submodularity property of $\gamma$ from Theorem~\ref{submod}, we have:
\begin{equation}
\label{au1}
\x'(\tau_1)+ \x'(\tau_2) \geq \x'(\tau_1 \cup \tau_2) + \x'(\tau_1 \cap \tau_2),
\end{equation}
noting that $\x'(\tau_1 \cap \tau_2)$ is well defined by the condition that $\tau_1 \cap \tau_2 \neq \emptyset$.
Since $\x'(\tau_1)=\x'(\tau_2)=0$, Inequality~(\ref{au1}) gives:
$$
0 \geq \x'(\tau_1 \cup \tau_2) + \x'(\tau_1 \cap \tau_2).
$$
It follows that the terms $\x'(\tau_1 \cup \tau_2)$ and 
$\x'(\tau_1 \cap \tau_2)$
on the right of this inequality  must both be zero since 
$\tau_1 \cup \tau_2$ and $\tau_1 \cap \tau_2$ are non-empty subsets of the slim set $\tau$ and so each has non-negative excess.
Thus, $\tau_1 \cup \tau_2, \tau_1 \cap \tau_2 \in \mathcal P$, as required.

	\hfill$\Box$ \end{proof}

\section{Discussion}

When  an evolutionary biologist compares a number of trees on different, but overlapping, leaf sets it is typically  very rare that these trees are found to be compatible, due mainly to errors in the estimation of phylogenetic trees.   Thus, in cases where the trees are compatible this fact alone may provide the biologist with some heightened confidence in the accuracy of the input trees.  However, such confidence should clearly depend, in part, on the pattern of taxon coverage.  In the extreme case where the subsets of species on which the input trees were built form a phylogenetically flexible collection, it is clear that compatibility provides absolutely no hint of accuracy of the input trees, since {\em any} trees that had been considered for those subsets would be compatible. 
For applications, it might therefore  be useful to quantify how close to `phylogenetically flexible' a given pattern of taxon coverage is. 

Our results also suggest a second possible future research direction. Since submodular functions are connected to matroid theory, are there 
relevant connections between thin/slim sets and matroids? Other matroid structures in phylogenetics have been recently been described, in different contexts,  by \cite{dre14} and  \cite{hel}.

\section{Acknowledgments}
We thank the organisers of the Algebraic and Combinatorial Phylogenetics Workshop (Barcelona, June 26--30 2017)
where some of the ideas in this paper were conceived, and the London Mathematical
Society for supporting the visit of KTH and VM to visit MS in New Zealand.  We also thank the two anonymous reviewers of this paper
for numerous helpful suggestions.


\end{document}